\newtheorem*{rep@theorem}{\rep@title}
\newcommand{\newreptheorem}[2]{
\newenvironment{rep#1}[1]{
 \def\rep@title{#2 \ref{##1}}
 \begin{rep@theorem}}
 {\end{rep@theorem}}}
\theoremstyle{plain}
\newtheorem{thm}{Theorem}[section]
\newtheorem{cor}[thm]{Corollary}
\theoremstyle{definition}
\newtheorem{defn}{Definition}
\theoremstyle{remark}
\newtheorem*{question}{Question}
\newcommand{\fancy}[1]{\mathcal{#1}}
\newcommand{\IN}{\mathbb{N}}
\newcommand{\set}[1]{\left\{ #1 \right\}}
\newcommand{\setb}[3]{\left\{ #1 \in #2 \mid #3 \right\}}
\newcommand{\setbs}[2]{\left\{ #1 \mid #2 \right\}}
\newcommand{\card}[1]{\left|#1\right|}
\newcommand{\size}[1]{\left\Vert#1\right\Vert}
\newcommand{\ceil}[1]{\left\lceil#1\right\rceil}
\newcommand{\floor}[1]{\left\lfloor#1\right\rfloor}
\newcommand{\irange}[1]{\left[#1\right]}
\newcommand{\join}[2]{#1 \mbox{\hspace{2 pt}$\ast$\hspace{2 pt}} #2}
\newcommand{\parens}[1]{\left( #1 \right)}
\newcommand{\brackets}[1]{\left[ #1 \right]}
\newcommand{\DefinedAs}{\mathrel{\mathop:}=}
\newcommand{\mov}[2]{#1^{#2}}
\newcommand{\wt}[1]{w\parens{#1}}
\renewcommand{\vec}[1]{\mathbf{#1}}
\title{Partitioning and coloring with degree constraints}
\author{Landon Rabern}
\begin{document}
	\begin{abstract}
We prove that if $G$ is a vertex critical graph with $\chi(G) \geq \Delta(G) + 1 - p \geq 4$
	for some $p \in \IN$ and $\omega(\fancy{H}(G)) \leq \frac{\chi(G) + 1}{p + 1} - 2$,
	then $G = K_{\chi(G)}$ or $G = O_5$.  Here $\fancy{H}(G)$ is the subgraph of $G$ induced on the vertices of degree at least $\chi(G)$.  This simplifies and improves the results in the paper of Kostochka, Rabern and Stiebitz \cite{krs_one}.
	\end{abstract}
	\maketitle

	\section{Introduction}
Our notation follows Diestel \cite{Diestel} unless otherwise specified.  
The natural numbers include zero; that is, $\IN \DefinedAs \set{0, 1, 2, 3, \ldots}$.  
We also use the shorthand $\irange{k} \DefinedAs \set{1, 2, \ldots, k}$.  
The complete graph on $t$ vertices is indicated by $K_t$ and the edgeless graph
on $t$ vertices by $E_t$.  A vertex $v \in V(G)$ is called \emph{universal} in $G$ if it is adjacent to every other vertex of $G$. We write $\fancy{H}(G)$ for the subgraph of $G$ induced on the vertices of degree at least $\chi(G)$.

The classical theorem of Brooks \cite{brooks1941colouring} gives the necessary and sufficient conditions for a graph to be $\Delta$-colorable.

\begin{thm}[Brooks \cite{brooks1941colouring} 1941]
If $G$ is a graph with $\chi(G) \geq \Delta(G) + 1 \geq 4$ then $G$ contains $K_{\chi(G)}$.
\end{thm}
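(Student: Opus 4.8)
The plan is to prove the contrapositive in the sharper form: \emph{every $2$-connected $\Delta$-regular graph $H$ with $\Delta \geq 3$ that is not complete satisfies $\chi(H) \leq \Delta$}, and then run the standard reductions. Since $\chi(G) \leq \Delta(G)+1$ always (greedy colouring), the hypothesis forces $\chi(G) = \Delta(G)+1 =: k$ with $\Delta := \Delta(G) = k-1 \geq 3$. Pass to a vertex-critical subgraph $H \subseteq G$ with $\chi(H)=k$. Then $\Delta(H) \leq \Delta$, while $k = \chi(H) \leq \Delta(H)+1$ forces $\Delta(H) = \Delta$; vertex-criticality forces $\delta(H) \geq k-1 = \Delta$ (a vertex of smaller degree could be greedily inserted into a $(k-1)$-colouring of $H$ minus it), so $H$ is $\Delta$-regular; it is connected (a component of maximum chromatic number would contradict criticality) and $2$-connected (critical graphs have no cut vertex). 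By the sharper statement such an $H$ would satisfy $\chi(H) \leq \Delta < k = \chi(H)$, which is impossible unless $H$ is complete; hence $H = K_{\Delta+1} = K_{\chi(G)}$, the desired complete subgraph of $G$.

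The colouring engine is the greedy algorithm run along a clever order. Suppose we can find vertices $a,b,c$ in $H$ with $a \sim b$, $a \sim c$, $b \not\sim c$, and $H - \{b,c\}$ connected. Take a breadth-first tree of $H-\{b,c\}$ rooted at $a$ and list $V(H)\setminus\{b,c\}$ in reverse order of discovery as $v_3, \dots, v_n$, so that $v_n = a$ and every $v_i$ with $3 \leq i < n$ has its tree-parent (a neighbour) later in the list. Set $v_1 := b$, $v_2 := c$, and colour $v_1, \dots, v_n$ greedily from the palette $\{1,\dots,\Delta\}$, giving $b$ and $c$ the colour $1$ (legal since $b \not\sim c$). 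When its turn comes, each $v_i$ with $3 \leq i < n$ has an uncoloured neighbour, hence at most $\Delta-1$ forbidden colours; and $v_n = a$ has $\Delta$ neighbours, but $b$ and $c$ are among them and share colour $1$, so again at most $\Delta-1$ colours are forbidden. This produces a proper $\Delta$-colouring of $H$, contradicting $\chi(H)=\Delta+1$. So the whole problem reduces to producing the triple $a,b,c$.

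If $H$ is $3$-connected this is immediate: $H$ is not complete, so there are vertices $b,c$ at distance exactly $2$; take any common neighbour $a$, and $H-\{b,c\}$ is connected by $3$-connectivity. The real work is the case that $H$ is $2$-connected but not $3$-connected. Fix a $2$-cut $\{x,y\}$ and put $a := x$. Then $H-x$ is connected but has $y$ as a cut vertex, so its block--cut-vertex tree has at least two leaf blocks $B_1 \neq B_2$; let $z_i$ be the unique cut vertex of $H-x$ lying in $B_i$. Since $H$ is $\Delta$-regular with $\Delta \geq 3$, neither $B_i$ is a single edge, so each $B_i$ is $2$-connected; and since $H$ is $2$-connected, $x$ must have a neighbour $b_i \in B_i \setminus \{z_i\}$, for otherwise $z_i$ alone would separate $B_i \setminus \{z_i\}$ from the rest of $H$. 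Take $b := b_1$ and $c := b_2$: these are non-adjacent (they lie in the interiors of distinct leaf blocks of $H-x$), $a = x$ is a common neighbour, and $H - \{b,c\}$ is connected --- deleting $b_i$ from the $2$-connected block $B_i$ leaves it connected and still attached at $z_i$, and $x$, retaining at least $\Delta-2 \geq 1$ of its neighbours, reattaches to the rest. I expect this $2$-cut analysis to be the only delicate point, and it is exactly where $\Delta \geq 3$ (equivalently $\chi(G) \geq \Delta(G)+1 \geq 4$) is indispensable: for $\Delta = 2$ the vertex $x$ can lose both its neighbours and the construction collapses, which is precisely the odd-cycle exception to Brooks' theorem.
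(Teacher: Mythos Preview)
Your proof is correct; it is the classical Lov\'asz argument for Brooks' theorem, and the reductions (to a $\Delta$-regular, $2$-connected critical subgraph) as well as the block-tree analysis in the non-$3$-connected case are carried out accurately. Note, however, that the paper does not give its own proof of this theorem: it is stated as a classical result with a citation to Brooks and no proof environment follows. So there is no ``paper's proof'' to compare against; you have supplied a complete and standard proof where the paper simply quotes the theorem.
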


In \cite{kierstead2009ore} Kierstead and Kostochka investigated the same question with the 
Ore-degree $\theta$ in place of $\Delta$.

\begin{defn}
The \emph{Ore-degree} of an edge $xy$ in a graph $G$ is $\theta(xy) \DefinedAs d(x) + d(y)$.  The \emph{Ore-degree} of a graph $G$ is $\theta(G) \DefinedAs \max_{xy \in E(G)}\theta(xy)$.
\end{defn}

\begin{thm}[Kierstead and Kostochka \cite{kierstead2009ore} 2010]
If $G$ is a graph with $\chi(G) \geq \floor{\frac{\theta(G)}{2}} + 1 \geq 7$ then $G$ contains $K_{\chi(G)}$.
\end{thm}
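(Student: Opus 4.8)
The plan is to deduce this directly from the main theorem stated in the abstract, applied with $p=1$; the sole role of the Ore-degree bound is to manufacture the two hypotheses that theorem needs — a bound on $\Delta$ relative to $\chi$, and the fact that $\fancy{H}$ is sparse — so Brooks' theorem will not even be invoked separately. Write $k\DefinedAs\chi(G)$, so $k\ge 7$, and pass to a vertex-critical subgraph $H\subseteq G$ with $\chi(H)=k$ (delete vertices one at a time while the chromatic number stays $k$). Since $K_k\subseteq H\subseteq G$ would give the conclusion, it suffices to find $K_k$ in $H$.

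Deleting vertices and edges cannot increase $\theta(xy)$ for a surviving edge $xy$, so $\theta(H)\le\theta(G)$, and hence $\chi(H)=k\ge\floor{\frac{\theta(G)}{2}}+1\ge\floor{\frac{\theta(H)}{2}}+1$, which forces $\theta(H)\le 2k-1$. Vertex-criticality gives $\delta(H)\ge k-1\ge 6$. Now I would squeeze $\theta(H)\le 2k-1$ in two ways. First, if $u,v$ are adjacent in $H$ with $d_H(u),d_H(v)\ge k$, then $\theta(uv)\ge 2k>2k-1$, a contradiction; so the vertices of degree $\ge\chi(H)=k$ form an independent set, i.e.\ $\fancy{H}(H)$ has no edge and $\omega(\fancy{H}(H))\le 1$. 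Second, if $v$ realizes $\Delta(H)$ and $u$ is any neighbour of $v$ (one exists since $\delta(H)\ge 6$), then $\Delta(H)+(k-1)\le d_H(v)+d_H(u)=\theta(uv)\le 2k-1$, so $\Delta(H)\le k$; in particular $\chi(H)=k\ge\Delta(H)$.

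It then remains to apply the main theorem to $H$ with $p=1$. The hypotheses hold: $\chi(H)\ge\Delta(H)=\Delta(H)+1-p$ with $\Delta(H)+1-p=\Delta(H)\ge 6\ge 4$, and $\omega(\fancy{H}(H))\le 1\le\frac{k-3}{2}=\frac{\chi(H)+1}{p+1}-2$ because $k\ge 7$ (in fact $k\ge 5$ would already suffice here). Hence $H=K_k$ or $H=O_5$. The case $H=O_5$ is impossible: $O_5$ is a fixed graph with $\chi(O_5)<7$ (in fact $\chi(O_5)=5$), whereas $\chi(H)=k\ge 7$; equivalently, $O_5$ does not satisfy $\chi\ge\floor{\theta/2}+1$ at all. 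Therefore $H=K_k$ and $K_{\chi(G)}=K_k\subseteq H\subseteq G$.

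There is no genuine obstacle once the main theorem is granted; the points that need care are that passing to a critical subgraph preserves $\chi\ge\floor{\theta/2}+1$ (and, since $\delta(H)\ge 6$, even the "$\ge 7$" clause, though that is not needed), that the single inequality $\theta(H)\le 2k-1$ must be used twice — once to get $\Delta(H)\le\chi(H)$ and once to make $\fancy{H}(H)$ edgeless — and that the exceptional graph $O_5$ is really excluded by $k\ge 7$. Indeed the "$\ge 7$" in the statement is present precisely to kill $O_5$; any threshold $\ge 6$ would do, so the main theorem in fact yields slightly more than the version recorded here.
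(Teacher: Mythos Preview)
Your derivation is correct and matches the paper's implicit claim: the paper does not prove this theorem directly (it is cited as a result of Kierstead and Kostochka), but states that the Ore-degree statement is equivalent to the vertex-critical statement, and that the main corollary implies the latter. Your reduction to Corollary~\ref{ThirdColoring} with $p=1$ is exactly the intended argument.

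One small correction to a parenthetical remark: you write that ``$O_5$ does not satisfy $\chi\ge\floor{\theta/2}+1$ at all.'' This is false. In $O_5$ the two high vertices $x,y$ have degree $5$ and all other vertices have degree $4$, so $\theta(O_5)=9$, and $\floor{9/2}+1=5=\chi(O_5)$; thus $O_5$ satisfies the inequality with equality (indeed this is precisely why $O_5$ is the genuine exception at $\chi=5$ in the Kostochka--Rabern--Stiebitz theorem). Your primary reason for excluding $O_5$---that $\chi(O_5)=5<7\le k$---is the correct one, so the proof stands; just delete the ``equivalently'' clause. Your closing observation that any threshold $\ge 6$ suffices is right and is exactly the content of the $\chi\ge\Delta\ge 5$ theorem cited from \cite{krs_one}.
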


This statement about Ore-degree is equivalent to the following statement about vertex critical graphs.

\begin{thm}[Kierstead and Kostochka \cite{kierstead2009ore} 2010]
The only vertex critical graph $G$ with $\chi(G) \geq \Delta(G) \geq 7$ such that $\fancy{H}(G)$ is edgeless is $K_{\chi(G)}$.
\end{thm}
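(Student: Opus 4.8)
The plan is to write $k \DefinedAs \chi(G)$, dispose quickly of the case $\Delta(G) \le k-1$, and in the remaining case derive a contradiction by showing $\chi(G) \le k-1$. First I would record that vertex criticality gives $\delta(G) \ge k-1$, so together with $\chi(G) \ge \Delta(G)$ every vertex of $G$ has degree $k-1$ or $k$. If $\Delta(G) \le k-1$, then $\chi(G) = k \ge \Delta(G)+1 \ge 4$, so Brooks' theorem gives $K_k \subseteq G$, and criticality forces $G = K_k$: a vertex lying outside a fixed copy of $K_k$ could be deleted without dropping the chromatic number below $k$. So from now on I assume $\Delta(G) = k$, and set $H \DefinedAs \setb{v}{V(G)}{d(v) = k} \ne \emptyset$. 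Since $\fancy{H}(G) = G[H]$ is edgeless, $H$ is an independent set, and $L \DefinedAs V(G) \setminus H$ consists precisely of the vertices of degree $k-1$.

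The core of the argument is to produce an independent set $I$ of $G$ with $H \subseteq I$, with $\Delta(G-I) \le k-2$, and with no component of $G-I$ isomorphic to $K_{k-1}$. Granting this, Brooks' theorem applied to $G-I$ finishes the proof: if $\chi(G-I) \ge k-1$, then $\chi(G-I) \ge \Delta(G-I)+1$ (as $\Delta(G-I) \le k-2$) and $\Delta(G-I) \ge \chi(G-I)-1 \ge k-2 \ge 5$, so $K_{\chi(G-I)} \subseteq G-I$; this clique has minimum degree $\chi(G-I)-1 \ge k-2 \ge \Delta(G-I)$, hence is a connected component and $\chi(G-I)-1 \le k-2$, forcing a $K_{k-1}$ component of $G-I$ — contrary to the choice of $I$. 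Thus $\chi(G-I) \le k-2$, and since $I$ is independent, $\chi(G) \le \chi(G-I)+1 \le k-1$, contradicting $\chi(G) = k$. To construct $I$, I would take $L_0 \DefinedAs L \setminus N(H)$, choose a maximal independent set $I_L$ of $G[L_0]$, and set $I \DefinedAs H \cup I_L$. Then $I$ is independent (no vertex of $I_L$ has a neighbour in $H$), and every vertex $u$ of $G-I$ lies in $L$ and has a neighbour in $I$ — one in $H$ if $u \in N(H)$, one in $I_L$ by maximality if $u \in L_0$ — so $d_{G-I}(u) \le d(u)-1 = k-2$.

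The step I expect to be the main obstacle is arranging that $G-I$ has no $K_{k-1}$ component, or handling such a component directly. With $I$ as above, a $K_{k-1}$ component $C$ of $G-I$ is a clique of $k-1$ low vertices, each having exactly one neighbour outside $C$, all of these neighbours lying in the independent set $I$ and hence pairwise nonadjacent; moreover they cannot all coincide, or $C$ together with that common vertex would be a $K_k$, impossible since $G$ is critical with $\chi(G)=k$. So the external neighbours split $C$ into at least two parts, and $G[C \cup N(C)]$ has a $(k-1)$-colouring in which every vertex of $N(C)$ repeats a colour on $C$; the delicate point is to merge such local colourings with a good colouring of the rest of $G$, or instead to perturb the choice of $I_L$ (and possibly of which high vertices go into $I$, replacing an omitted high vertex by two of its nonadjacent neighbours) so as to destroy all $K_{k-1}$ components. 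Here I would lean on Gallai's classical theorem that the subgraph $G[L]$ induced on the minimum‑degree vertices of the $k$-critical graph $G$ is a Gallai forest, together with the numerical slack afforded by $k \ge 7$; these are exactly the ingredients that would fail for the sporadic graph $O_5$ of the abstract, which is why the threshold here is $\Delta(G) \ge 7$. Finally, I note that the statement is the $p=1$ instance of the paper's main theorem, so once that is available it follows by checking $\omega(\fancy{H}(G)) \le 1 \le \frac{\chi(G)+1}{2} - 2$ and observing that $O_5$ is excluded by $\Delta(G) \ge 7$.
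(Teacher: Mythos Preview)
The paper does not give a standalone proof of this theorem; it is quoted as a result of Kierstead and Kostochka and then recovered as a special case of Corollary~\ref{ThirdColoring}. Your final sentence is exactly how the paper derives it: take $p=1$, note $\omega(\fancy{H}(G)) \le 1 \le \frac{\chi(G)+1}{2}-2$ since $\chi(G) \ge 7$, apply the corollary, and rule out $O_5$ because $\Delta(O_5)=5<7$. That part is correct and matches the paper.

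The direct argument occupying most of your proposal, however, has a genuine gap at precisely the point you yourself flag. Having reduced to $\Delta(G)=k$ and built $I = H \cup I_L$, you correctly obtain $\Delta(G-I) \le k-2$, but you have \emph{not} shown how to avoid $K_{k-1}$ components in $G-I$. The proposed remedy---``lean on Gallai's theorem together with the numerical slack afforded by $k \ge 7$''---is not an argument. Gallai's theorem only tells you every block of $G[L]$ is a clique or an odd cycle; it does not rule out $K_{k-1}$ blocks, and such a block whose external attachments all land in $I$ is exactly the bad component you need to exclude. Nor does the structure of $G[L_0]$ control this, since a $K_{k-1}$ component of $G-I$ may sit entirely inside $N(H)\cap L$, where the choice of $I_L$ has no influence at all. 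Your instinct to ``perturb $I_L$, or replace a high vertex by two nonadjacent neighbours'' points in the right direction, but carrying that out is essentially the content of the Catlin--Mozhan moving argument the paper packages into Theorem~\ref{PartitionTheorem}: it requires an iterated vertex-shifting scheme with a potential function and a termination analysis, not a one-shot appeal to Gallai forests. As written, the direct route stalls at the crucial step, and the only complete proof in your proposal is the one-line deduction from the paper's main corollary.
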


In \cite{rabern2010a}, we improved the $7$ to $6$ by proving the following generalization.

\begin{thm}[Rabern 2012 \cite{rabern2010a}]
The only vertex critical graph $G$ with $\chi(G) \geq \Delta(G) \geq 6$ and $\omega(\fancy{H}(G)) \leq \floor{\frac{\Delta(G)}{2}} - 2$ is $K_{\chi(G)}$.
\end{thm}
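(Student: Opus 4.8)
The plan is to suppose that $G$ is a vertex critical graph satisfying the hypotheses with $G \neq K_{\chi(G)}$ and to manufacture a proper colouring of $G$ using only $\chi(G) - 1$ colours, which is impossible. First dispose of the case $\chi(G) = \Delta(G) + 1$: then $\chi(G) \geq \Delta(G) + 1 \geq 4$, so Brooks' theorem gives $K_{\chi(G)} \subseteq G$, and if $G$ properly contained $K_{\chi(G)}$ then deleting any vertex outside it would not lower $\chi$, contradicting vertex criticality; hence $G = K_{\chi(G)}$. So I may assume $\chi(G) = \Delta(G) \geq 6$ and abbreviate $\Delta \DefinedAs \Delta(G)$. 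Vertex criticality forces $\delta(G) \geq \chi(G) - 1 = \Delta - 1$, so every vertex of $G$ has degree $\Delta - 1$ (a \emph{low} vertex) or $\Delta$ (a \emph{high} vertex), and $\fancy{H}(G)$ is precisely the subgraph induced on the high vertices. Let $L$ be the subgraph induced on the low vertices; by Gallai's theorem every block of $L$ is complete or an odd cycle, and if $L$ were spanning and connected then $G$ would be $(\Delta - 1)$-regular, whence Brooks' theorem would give $G = K_\Delta$ --- excluded. Finally set $d_1 \DefinedAs \ceil{\tfrac{\Delta - 1}{2}}$ and $d_2 \DefinedAs \floor{\tfrac{\Delta - 1}{2}}$, so that $d_1 + d_2 = \Delta - 1$, $d_1 \geq d_2 \geq 2$, and $d_i = 2$ only when $\Delta = 6$.

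Since $\Delta \leq (d_1 + 1) + (d_2 + 1) - 1$, the classical partition lemma of Lov\'asz supplies a partition $V(G) = V_1 \cup V_2$ with $\Delta(G[V_i]) \leq d_i$ for each $i$. I would fix such a partition that is extremal in a suitable sense: first minimize the total number of edges lying within the parts, $\size{G[V_1]} + \size{G[V_2]}$, and then break ties by minimizing the number of edges that lie inside a copy of $K_{d_i + 1}$ in their own part. The goal is to show that this partition has no copy of $K_{d_i + 1}$ inside $G[V_i]$, and (when $d_i = 2$) no odd cycle inside $G[V_i]$; granting this, Brooks' theorem inside each part gives $\chi(G[V_i]) \leq d_i$, so colouring $G[V_1]$ from $\irange{d_1}$ and $G[V_2]$ from $\set{d_1 + 1, \dots, d_1 + d_2}$ properly colours $G$ with $d_1 + d_2 = \Delta - 1$ colours --- the desired contradiction.

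Everything thus reduces to the claim, which is the heart of the matter. Suppose $Q \cong K_{d_i + 1}$ lies inside $G[V_i]$. Each $v \in V(Q)$ already has its $d_i$ neighbours in $Q$ inside $V_i$, and since $\Delta(G[V_i]) \leq d_i$ it has no further neighbour in $V_i$; so $v$ has $d(v) - d_i$ neighbours in $V_{3-i}$, equal to $d_{3-i}$ when $v$ is low and $d_{3-i} + 1$ when $v$ is high. If every vertex of $Q$ were high, then $Q \subseteq \fancy{H}(G)$, giving $\omega(\fancy{H}(G)) \geq d_i + 1 > \floor{\Delta/2} - 2$, contradicting $\omega(\fancy{H}(G)) \leq \floor{\Delta/2} - 2$; hence $Q$ contains a low vertex $v$. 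Moving such a $v$ into $V_{3-i}$ is legal --- its resulting degree there is exactly the bound $d_{3-i}$ --- and alters $\size{G[V_1]} + \size{G[V_2]}$ by $d_{3-i} - d_i$, which is negative precisely when $d_{3-i} < d_i$; that settles the subcase $\Delta$ even, $i = 1$, $Q \cong K_{\Delta/2 + 1}$. The remaining subcases --- $\Delta$ odd with $d_1 = d_2$, and $\Delta$ even with $i = 2$ and $Q \cong K_{\Delta/2}$ --- are the main obstacle: moving $v$ no longer decreases (and in the last subcase increases) the edge count, so one must instead exploit the secondary extremal condition together with the Gallai forest structure of $L$ --- the bound on $\omega(\fancy{H}(G))$ guarantees that $Q$ has at least two low vertices, which form a clique in $L$ and so lie in a common block --- and perform a Kempe-chain exchange of the two parts along a suitable alternating subgraph to contradict extremality. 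The hypotheses $\Delta \geq 6$ and $\omega(\fancy{H}(G)) \leq \floor{\Delta/2} - 2$ are calibrated to leave just enough slack for these exchanges, and the analogous odd-cycle obstruction at $\Delta = 6$ is treated by the same technique applied to a three-colour-class structure.
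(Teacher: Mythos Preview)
Your proposal has a genuine gap: you explicitly identify the hard subcases --- $\Delta$ odd with $d_1 = d_2$, and $\Delta$ even with the obstruction $K_{\Delta/2}$ sitting in $G[V_2]$ --- and then do not prove them. The sentence ``perform a Kempe-chain exchange of the two parts along a suitable alternating subgraph to contradict extremality'' is not an argument; it is a hope. In these subcases moving a single low vertex of $Q$ leaves the primary edge count unchanged (or increases it), and the secondary count need not drop either: the $d_{3-i}$ neighbours of the moved vertex in $V_{3-i}$ may well form a clique, recreating a full $K_{d_{3-i}+1}$ on the other side. So one is forced into an iterated sequence of moves, and the entire content of the theorem lies in controlling what happens when that sequence cycles back on itself. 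Your appeal to ``the Gallai forest structure of $L$'' is not enough --- nothing in the Gallai block structure of the low subgraph prevents the moves from simply shuttling between two cliques forever --- and the phrase ``three-colour-class structure'' for the odd-cycle obstruction at $\Delta = 6$ does not correspond to anything in a two-part partition.

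For comparison, the paper does not prove this theorem in isolation; it is quoted as a prior result and then subsumed by Corollary~\ref{ThirdColoring} (take $p = 1$; note $\chi \geq 6$ rules out $O_5$). The real work is Theorem~\ref{PartitionTheorem}: one tracks exactly the iterated-move process you gesture at, shows that when it first revisits a leftover component one must have $s = 1$, $t = 2$, both obstructions complete, and the movable (degree-$d$) parts of each joined to the rest of the other. This produces a single structured induced subgraph $Q$ on $\chi$ vertices partitioned into cliques with many universal low vertices, and only \emph{then} is the obstruction killed --- not by a further partition move, but by a list-colouring/counting argument on $Q$ (Corollary~\ref{ThirdColoring}). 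Your outline reaches the doorstep of Theorem~\ref{PartitionTheorem} but supplies neither the cycling analysis nor any substitute for the subsequent structural argument on $Q$.
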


This result and those in \cite{rabern2010b} were improved by Kostochka, Rabern and Stiebitz in \cite{krs_one}.  In particular, the following was proved.

\begin{thm}[Kostochka, Rabern and Stiebitz \cite{krs_one} 2012]
The only vertex critical graphs $G$ with $\chi(G) \geq \Delta(G) \geq 5$ such that $\fancy{H}(G)$ is edgeless are $K_{\chi(G)}$ and $O_5$.
\end{thm}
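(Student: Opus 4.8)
\emph{Reduction.} I would derive the statement as the $p=1$ case of the general theorem announced in the abstract: if $G$ is vertex critical with $\chi(G)\ge\Delta(G)+1-p\ge 4$ and $\omega(\mathcal{H}(G))\le\frac{\chi(G)+1}{p+1}-2$, then $G=K_{\chi(G)}$ or $G=O_5$. Indeed, with $p=1$ an edgeless $\mathcal{H}(G)$ has $\omega(\mathcal{H}(G))\le 1$, and $\chi(G)\ge\Delta(G)\ge 5$ gives $\frac{\chi(G)+1}{2}-2\ge 1$, so both hypotheses hold. (A shorter route specific to this corollary is to invoke the Rabern 2012 theorem above to eliminate $\Delta(G)\ge 6$ at once, reducing the problem to classifying the $5$-critical graphs with $\Delta=\chi=5$ whose degree-$5$ vertices form an independent set; the unique such graph turns out to be $O_5$.) I describe how I would prove the general theorem.

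\emph{Setup and easy cases.} Let $k=\chi(G)$. Vertex-criticality makes $G$ connected and $k$-critical, so $\delta(G)\ge k-1$; with $\Delta(G)\le k-1+p$, every degree lies in $\{k-1,\dots,k-1+p\}$. Call a vertex \emph{high} if its degree is at least $k$, otherwise \emph{low}; the high vertices induce $\mathcal{H}(G)$, and by Gallai's classical structure theorem for critical graphs the low vertices induce a graph all of whose blocks are complete graphs or odd cycles. If $G$ has no high vertex then $\chi(G)\ge\Delta(G)+1$, so Brooks' theorem gives $K_{\chi(G)}\subseteq G$, whence $G=K_{\chi(G)}$ by criticality; and the clique-number bound forbids high vertices when $k=4$, so that case too yields $G=K_4$. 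Hence I may assume $k\ge 5$, a high vertex exists, and the task becomes: exhibit a proper $(k-1)$-colouring of $G$ --- contradicting $k$-criticality --- unless $G=O_5$.

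\emph{The colouring mechanism.} Suppose we have an independent set $I\subseteq V(G)$ with $|N(v)\cap I|\ge d_G(v)-(k-2)$ for every high vertex $v$. Colour all of $I$ with a single colour $c_0$ from a palette of $k-1$, and give each $v\in V(G)\setminus I$ the list $\{1,\dots,k-1\}$, with $c_0$ removed if $v$ has a neighbour in $I$. A short check gives $|L(v)|\ge d_{G-I}(v)$ for every $v$ (low vertices automatically; high vertices by the hypothesis on $I$ --- note $d_G(v)-(k-2)\ge 2$, so every high vertex meets $I$). By the Borodin--Erd\H{o}s--Rubin--Taylor degree-choosability theorem, $G-I$ is then colourable from these lists, so $\chi(G)\le k-1$, \emph{unless} some component of $G-I$ is a Gallai tree all of whose blocks are complete and carry coinciding lists (odd-cycle blocks, whose lists have size $k-2\ge 3$, are never obstructions since $k\ge 5$). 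A $K_k$-block among these forces $G=K_k$; the $K_{k-1}$-blocks and the way they hang together are the configurations that must be dismantled by hand.

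\emph{Constructing $I$, and the main obstacle.} The crux is producing $I$: an independent set meeting each high neighbourhood with multiplicity $d_G(v)-(k-2)\le p+1$. I expect this to be the genuinely hard step, and the place where the hypothesis $\omega(\mathcal{H}(G))\le\frac{\chi(G)+1}{p+1}-2$ is actually used. The plan is to house most of $I$ inside the low-vertex Gallai forest, where independent sets are abundant (one vertex from each complete block, nearly half of each odd cycle), and to run a Haxell-type independent-transversal argument over the high vertices; the small-clique bound is exactly what limits how much the high neighbourhoods can overlap, letting the transversal argument succeed with the stated constant. I would isolate this as a standalone lemma and prove it before assembling the colouring. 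The remaining labour is the endgame on the Gallai-tree obstructions, which I expect to dominate the write-up even though it is mostly bookkeeping: from the tight-list conditions one reads off how each surviving $K_{k-1}$-block attaches to $I$ and to the rest of the low-vertex forest, then exploits the freedom to recolour the independent set $I$ to kill all but the truly rigid local configurations, and finally checks that the one configuration still resisting a $(k-1)$-colouring is $O_5$.
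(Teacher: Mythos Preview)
Your reduction to the $p=1$ case of the main theorem (Corollary~\ref{ThirdColoring}) is exactly right, and this is precisely how the paper obtains the Kostochka--Rabern--Stiebitz result.

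Your proposed proof of the general theorem, however, takes a route quite different from the paper's and contains a genuine gap. The paper does not build a global independent set $I$ meeting high neighbourhoods, nor does it use Haxell-type transversals or the Gallai low-vertex structure. It proves a Catlin-style partition lemma (Theorem~\ref{PartitionTheorem}): for $\vec{r}\in\IN_{\ge 2}^k$ with $\wt{\vec{r}}\ge\max\{\Delta(G)+1-k,d\}$, either there is an $\vec{r}$-partition in which every $r_i$-obstruction satisfies tight degree constraints, or $G$ contains a structured subgraph $Q$ with $\card{Q}=d+1$, partitioned into cliques $F_1,\dots,F_k$ whose degree-$d$ vertices are universal in $Q$. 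Applied with $d=\chi(G)-1$, $k=p+1$, and each $r_i>\omega(\fancy{H}(G))$, the first alternative yields $\chi(G)\le\chi(G)-1$, so one obtains $Q=\join{K_t}{H}$ with $t\ge\chi-(p+1)h$. One then $(\chi-1)$-colours $G-Q$, uses the resulting lists on $Q$ to force every maximal independent set in $H$ to have size at most~$2$, deduces $p=1$ and $Q=\join{K_3}{2K_h}$, and finishes by merging a nonadjacent pair $x,y\in V(H)$ and invoking minimality of $\card{G}$.

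The concrete gap in your plan is the construction of $I$. Already at $p=1$ the graph $O_5$ itself admits \emph{no} independent set $I$ with $\card{N(v)\cap I}\ge 2$ for each high vertex: its two high vertices $x,y$ have all their low neighbours in a triangle (common to both) together with a $K_4$ split two-and-two between them, so any independent $I$ picks at most one vertex from each clique and can meet at most one of $N(x),N(y)$ twice. Hence the transversal step cannot ``succeed with the stated constant'' in general; $O_5$ must be detected at the point where the construction of $I$ fails, not in the Gallai-tree endgame where you placed it. Without a lemma characterising exactly when and how the transversal fails --- and you offer none --- the argument does not go through.
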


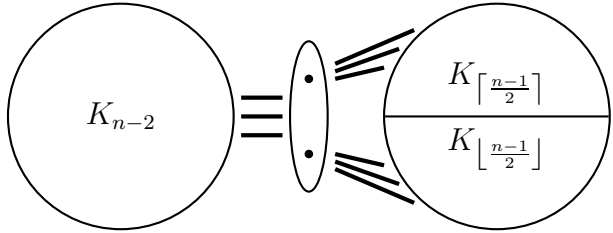
\begin{figure}[ht]
\centering
\begin{tikzpicture}[scale = 1]

\node[circle, minimum width=3cm, thick, draw] (L) at (0,0) {$K_{n-2}$};
\node[ellipse, minimum height=2cm, minimum width=0.5cm, thick, draw] (M) at (2.5,0) {};      
\node[circle split, minimum width=3cm, thick, draw] (R) at (5,0) 
{$K_{\ceil{\frac{n-1}{2}}}$ \nodepart{lower} $K_{\floor{\frac{n-1}{2}}}$};
\node[circle, inner sep =1pt, fill, draw] (P1) at (2.5,-0.5) {};
\node[circle, inner sep =1pt, fill, draw] (P2) at (2.5,0.5) {};

\draw (L) (M) (R) (P1) (P2);
\draw[ultra thick] (1.6,-.25) -- (2.15,-.25);
\draw[ultra thick] (1.6,0) -- (2.15,0);
\draw[ultra thick] (1.6,.25) -- (2.15,.25);

\draw[ultra thick] (2.85,-.7) -- (3.9,-1.15);
\draw[ultra thick] (2.85,-.6) -- (3.7,-.9);
\draw[ultra thick] (2.85,-.5) -- (3.5,-.65);

\draw[ultra thick] (2.85,.7) -- (3.9,1.15);
\draw[ultra thick] (2.85,.6) -- (3.7,.9);
\draw[ultra thick] (2.85,.5) -- (3.5,.65);
\end{tikzpicture}
\caption{The graph $O_n$.}
\label{fig:reducer}
\end{figure}

Here $O_n$ is the graph formed from the disjoint union of $K_n - xy$ and
$K_{n-1}$ by joining $\floor{\frac{n-1}{2}}$ vertices of the $K_{n-1}$ to $x$
and the other $\ceil{\frac{n-1}{2}}$ vertices of the $K_{n-1}$ to $y$ (see
Figure \ref{fig:reducer}). In this paper we prove a result which implies all of the results in \cite{krs_one}.  The proof replaces an algorithm of Mozhan \cite{mozhan1983} with the original, more general, algorithm of Catlin \cite{CatlinAnotherBound} on which it is based. This allows for a considerable simplification.  Moreover, we prove two preliminary partitioning results that are of independent interest.  All coloring results follow from the first of these, the second is a generalization of a lemma due to Borodin \cite{borodin1976decomposition} (and independently Bollob\'as and Manvel \cite{bollobasManvel}) about partitioning a graph into degenerate subgraphs.  The following is the main coloring result in this paper.

   \begin{repcor}{ThirdColoring}
	Let $G$ be a vertex critical graph with $\chi(G) \geq \Delta(G) + 1 - p \geq 4$
	for some $p \in \IN$.  If $\omega(\fancy{H}(G)) \leq \frac{\chi(G) + 1}{p + 1} - 2$,
	then $G = K_{\chi(G)}$ or $G = O_5$.
	\end{repcor}

	\section{Partitioning}
	An \emph{ordered partition} of a graph $G$ is a sequence $\parens{V_1, V_2,
	\ldots, V_k}$ where the $V_i$ are pairwise disjoint and cover $V(G)$.  Note that we allow the $V_i$ to be
empty.  When there is no possibility of ambiguity, we call such a sequence a
\emph{partition}.	For a vector $\vec{r} \in \IN^k$ we take the
coordinate labeling $\vec{r} = \parens{r_1, r_2, \ldots, r_k}$ as convention. 
Define the \emph{weight} of a vector $\vec{r} \in \IN^k$ as $\wt{\vec{r}} \DefinedAs \sum_{i \in \irange{k}} r_i$.   
Let $G$ be a graph. An \emph{$\vec{r}$-partition} of $G$ is an ordered partition
$P \DefinedAs \parens{V_1, \ldots, V_k}$ of $V(G)$ minimizing \[f(P) \DefinedAs \sum_{i \in \irange{k}} \parens{\size{G[V_i]} - r_i\card{V_i}}.\]

It is a fundamental result of Lov\'asz \cite{lovasz1966decomposition} that if $P \DefinedAs \parens{V_1, \ldots, V_k}$ is an $\vec{r}$-partition of $G$ with $\wt{\vec{r}} \geq \Delta(G) + 1 - k$, then $\Delta(G[V_i]) \leq r_i$ for each $i \in \irange{k}$.  The proof is simple: if there is a vertex in a part violating the condition, then there is some part it can be moved to that decreases $f(P)$.  As Catlin \cite{CatlinAnotherBound} showed, with the stronger condition $\wt{\vec{r}} \geq \Delta(G) + 2 - k$, a vertex of degree $r_i$ in $G[V_i]$ can always be moved to some other part while maintaining $f(P)$.  Since $G$ is finite, a well-chosen sequence of such moves must always wrap back on itself.  Many authors, including Catlin \cite{CatlinAnotherBound}, Bollob\'as and Manvel \cite{bollobasManvel} and Mozhan \cite{mozhan1983} have used such techniques to prove coloring results. We generalize these techniques by taking into account the degree in $G$ of the vertex to be moved---a vertex of degree less than the maximum needs a weaker condition on $\wt{\vec{r}}$ to be moved.

For $x \in V(G)$ and $D \subseteq V(G)$ we use the notation $N_D(x) \DefinedAs N(x) \cap D$ and $d_D(x) \DefinedAs \card{N_D(x)}$. Let $\fancy{C}(G)$ be the components of $G$ and $c(G) \DefinedAs \card{\fancy{C}(G)}$. For an induced subgraph $H$ of $G$, define $\delta_G(H) \DefinedAs \min_{v \in V(H)} d_G(v)$.  We also need the following notion of a movable subgraph.
		
		\begin{defn}
			Let $G$ be a graph and $H$ an induced subgraph of $G$.  For $d \in \IN$, the
			\emph{$d$-movable subgraph} of $H$ with respect to $G$ is the subgraph
			$\mov{H}{d}$ of $G$ induced on \[\setb{v}{V(H)}{d_G(v) = d \text{ and } H-v
			\text{ is connected}}.\]
		\end{defn}

We prove two partition lemmas of similar form.  
All of our coloring results will follow from the first lemma, the second lemma is a 
degeneracy result from which Borodin's result in \cite{borodin1976decomposition} follows.  
For unification purposes, define a \emph{$t$-obstruction} as an odd cycle when
$t=2$ and a $K_{t + 1}$ when $t \geq 3$.

      \begin{thm}\label{PartitionTheorem}
			Let $G$ be a graph, $k,d \in \IN$ with $k \geq 2$ and $\vec{r} \in \IN_{\geq 2}^k$.  If $\wt{\vec{r}} \geq \max\set{\Delta(G) + 1 - k, d}$, then at least one of the following holds:
			\begin{enumerate}
			  \item $\wt{\vec{r}} = d$ and $G$ contains an induced subgraph $Q$ with $\card{Q} = d+1$ which can be partitioned into $k$ cliques $F_1, \ldots, F_k$ where 
					\begin{enumerate}
					\item $\card{F_1} = r_1 + 1$, $\card{F_i} = r_i$ for $i \geq 2$,
					\item $\card{F_1^d} \geq 2$, $\card{F_i^d} \geq 1$ for $i \geq 2$,
					\item for $i \in \irange{k}$, each $v \in V(F_i^d)$ is universal in $Q$;
					\end{enumerate}
			  \item there exists an $\vec{r}$-partition $P \DefinedAs \parens{V_1, \ldots, V_k}$ of 	
$G$ such that if $C$ is an $r_i$-obstruction in $G[V_i]$, then $\delta_G(C) \geq d$ and
			  $\mov{C}{d}$ is edgeless.
			\end{enumerate}
		\end{thm}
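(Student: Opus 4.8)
The plan is to follow the Catlin-style exchange argument, but tracking $G$-degrees of the vertices being moved rather than just whether they are maximum-degree. Start from an $\vec{r}$-partition $P = (V_1,\dots,V_k)$ of $G$ — one exists since $V(G)$ is finite and $f$ is integer-valued and bounded below. By the Lov\'asz bound, the hypothesis $\wt{\vec{r}} \ge \Delta(G)+1-k$ already gives $\Delta(G[V_i]) \le r_i$ for every $i$, so the obstructions in $G[V_i]$ are exactly the $r_i$-regular pieces: odd cycles when $r_i = 2$ and copies of $K_{r_i+1}$ when $r_i \ge 3$. Suppose conclusion (2) fails for $P$; then some $V_i$ contains an $r_i$-obstruction $C$ with either $\delta_G(C) < d$ or $C^d$ having an edge. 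I want to show that in either case I can modify $P$ so as to strictly decrease $f(P)$ — contradicting minimality — unless we are in the exceptional configuration (1), where $\wt{\vec r} = d$ and a rigid clique structure $Q$ emerges.

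The core lemma I would isolate first is a \emph{single-move} step: if $v \in V(C) \subseteq V_i$ has $d_G(v) = r_i + (\wt{\vec r} - \Delta(G) + k - 1)$-slack to spare — more precisely, if $d_{V_i}(v) = r_i$ (forced inside an obstruction) and $\sum_{j\ne i} \max(0, r_j - d_{V_j}(v)) $ is large enough — then moving $v$ to a part $V_j$ where $d_{V_j}(v) \le r_j$ keeps $f(P)$ non-increasing, and strictly decreasing if $d_{V_j}(v) < r_j$. Counting: moving $v$ out of $V_i$ changes $f$ by $-d_{V_i}(v) + r_i = 0$; moving it into $V_j$ changes $f$ by $d_{V_j}(v) - r_j \le 0$. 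So $f$ never increases, and the move is "free" exactly when $d_{V_j}(v) = r_j$. Since $\sum_j d_{V_j}(v) = d_G(v) \le \Delta(G)$ and $\sum_j r_j = \wt{\vec r} \ge \Delta(G)+1-k$, a vertex of degree $d_G(v)$ has at least $\wt{\vec r} - d_G(v) + (k-1) \ge k - 1 - (\Delta(G) - d_G(v))$ "excess room" across the other parts; when $d_G(v)$ is small relative to $\Delta(G)$ this forces a strictly improving move, and when $d_G(v)$ is large we only get free moves. This is where the degree-in-$G$ refinement does its work.

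Now iterate. Build an auxiliary digraph on the vertices reachable from $C$ by sequences of free moves that keep every intermediate partition an $\vec{r}$-partition and keep regenerating an $r_i$-obstruction through the moved vertex (this is why $C^d$ — requiring $H - v$ connected and $d_G(v) = d$ — is the right object: those are precisely the vertices of an obstruction one may extract while keeping the rest connected, hence still an obstruction or reducible to one). Because $G$ is finite, following free moves must eventually either (a) produce a strictly improving move somewhere along the way — contradiction with minimality of $f(P)$; or (b) close up into a cycle of configurations whose union is a tightly constrained subgraph. In case (b), the accounting is everywhere tight: $\wt{\vec r} = \Delta(G) + 1 - k$ is forced along the cycle, every relevant vertex has $d_G = \Delta(G)$, the obstructions are cliques glued along universal vertices, and assembling the pieces visited yields an induced $Q$ on $\wt{\vec r} + 1 = d+1$ vertices partitioned into cliques $F_1,\dots,F_k$ of the stated sizes with the stated universal-vertex and $|F_i^d|$ properties. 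The normalization $|F_1| = r_1+1$ versus $|F_i| = r_i$ for $i \ge 2$ comes from the single extra vertex $C$ contributes to its home part before the cycle of exchanges redistributes the rest.

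The main obstacle I expect is case (b): proving that a closed cycle of free moves forces the full clique-with-universal-vertices structure of (1), rather than merely some weaker regularity. The delicate points are (i) showing the moved vertices are \emph{universal} in $Q$ — this should follow because at the moment $v$ is moved into $V_j$ it must be adjacent to exactly $r_j = |F_j|$ vertices there (tightness) and those vertices together with $v$ must themselves form the next obstruction, forcing complete adjacency; and (ii) controlling the $r_i = 2$ (odd cycle) case uniformly with the $r_i \ge 3$ (clique) case, since an odd cycle is not a clique and "universal in $Q$" behaves differently — here one likely argues that once $d \ge \wt{\vec r}$ and the slack is zero, an odd-cycle obstruction on a vertex of full degree cannot actually close a free-move cycle unless it degenerates, pushing those cases into conclusion (2) or collapsing $C$. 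Handling that dichotomy cleanly, and verifying $|F_1^d| \ge 2$ (the home part of $C$ retains two full-degree movable vertices), is the technical heart.
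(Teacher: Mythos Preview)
Your overall strategy is right --- an $\vec r$-partition plus Catlin-style exchanges --- and you correctly isolate the single-move arithmetic. But there is a genuine gap in how you drive the iteration, and a misidentification of the tightness condition.

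\textbf{The missing secondary minimization.} You take an $\vec r$-partition $P$, find a bad obstruction $C$ (one with an edge in $C^d$), and start moving vertices. You then assert that the chain of free moves ``keeps regenerating an $r_i$-obstruction through the moved vertex,'' and that the only termination is either (a) a strict drop in $f$ or (b) a closed cycle. But nothing forces regeneration. When you move $x_1$ out of $C\subseteq V_1$ into $V_2$, you get a new partition $P'$ with $f(P')=f(P)$; it is perfectly possible that $x_1$ lands in no bad obstruction in $V_2$. Then $P'$ is another $\vec r$-partition with one fewer bad obstruction, and your process simply stalls --- there is no contradiction with minimality of $f$, because $f$ has not moved. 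The paper closes this gap by choosing, among all $\vec r$-partitions, one that \emph{also} minimizes
\[
b(P)\;=\;\sum_{i}\#\{\text{$r_i$-obstructions $C$ in $G[V_i]$ with an edge in $C^{d}$}\}.
\]
Now removing $x_1$ from its bad obstruction drops $b$ by one on the $V_1$ side, so by minimality of $b(P)$ it must rise on the $V_2$ side; since $x_1$ is the only new vertex there, the new bad obstruction contains $x_1$. This is what legitimizes the chain $A_1,A_2,\dots$ and gives you something to cycle back to. Without this second extremal choice (or an equivalent finite-descent argument on $b$), your step ``keep regenerating'' is unjustified.

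\textbf{Wrong tightness.} In your case~(b) you write that the accounting forces $\wt{\vec r}=\Delta(G)+1-k$ and $d_G=\Delta(G)$ for the moved vertices. That is not what falls out. A vertex $x$ inside an $r_i$-obstruction has $d_{V_i}(x)=r_i$; minimality of $f$ then gives $d_{V_j}(x)\ge r_j$ for every $j$, hence $d_G(x)\ge\wt{\vec r}$. The vertices you are moving lie in $C^d$, so $d_G(x)=d$; combining gives $\wt{\vec r}=d$ (which is exactly the equality in conclusion~(1)) and $d_{V_j}(x)=r_j$ for all $j$. The parameter $\Delta(G)$ need not be hit at all; the hypothesis $\wt{\vec r}\ge\Delta(G)+1-k$ is only used to get $\Delta(G[V_i])\le r_i$ via Lov\'asz.

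\textbf{The closed-cycle analysis.} Your sketch of case~(b) lists the targets (universality, $|F_1^d|\ge 2$, cliques) but does not indicate how to reach them. In the paper this is the technical heart: one shows the cycle has length two ($s=1$, $t=2$), that each $A_i$ is \emph{complete} (the odd-cycle case is handled by proving $N[z]\cap V(A_s)=N[x_s]\cap V(A_s)$, which forces an odd cycle to be a triangle --- so your worry about $r_i=2$ is resolved by collapse to $K_3$, not by exclusion), and that the movable subgraphs $A_s^d$, $A_t^d$ are joined to the opposite pieces. Only after this does one run the same two-part argument with $V_1$ against each $V_i$, $i\ge 3$, to assemble $Q$. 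Your plan to read universality off a single tight move (``adjacent to exactly $r_j$ vertices there'') gives adjacency into one $F_j$, not into all of $Q$; the pairwise argument across parts is what promotes this to universality.
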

\begin{proof}
         For $i \in \irange{k}$, call a connected graph $C$
			\emph{$i$-bad} if $C$ is an $r_i$-obstruction such that $\mov{C}{d}$ has an edge. 
         For a graph $H$ and $i
			\in \irange{k}$, let $b_i(H)$ be the number of $i$-bad components of $H$.
			For an $\vec{r}$-partition $P \DefinedAs \parens{V_1, \ldots, V_k}$ of $G$ let
			\[b(P) \DefinedAs \sum_{i \in \irange{k}} b_i(G[V_i]).\]
			
			\noindent Let $P \DefinedAs \parens{V_1, \ldots, V_k}$ be an $\vec{r}$-partition of
			$V(G)$ minimizing $b(P)$.

			Let $i \in \irange{k}$ and $x \in V_i$ with $d_{V_i}(x) \geq r_i$.  Suppose $d_G(x) = d$.
			Then, since $\wt{\vec{r}} \geq d$, for every $j \neq i$ we have $d_{V_j}(x) \leq
			r_j$. Moving $x$ from $V_i$ to $V_j$ gives a new partition $P^*$ with $f(P^*)
			\leq f(P)$. Note that if $d_{G}(x) < d$ we would have $f(P^*) < f(P)$
			contradicting the minimality of $P$.

			Suppose $b(P) > 0$.  By symmetry, we may assume that there is a
			$1$-bad component $A_1$ of $G[V_{1}]$. Put $P_1 \DefinedAs P$ and $V_{1,i}
			\DefinedAs V_i$ for $i \in \irange{k}$. Since $A_1$ is $1$-bad we have $x_1
			\in V(\mov{A_1}{d})$ which has a neighbor in $V(\mov{A_1}{d})$. By the above we
			can move $x_1$ from $V_{1, 1}$ to $V_{1, 2}$ to get a new partition $P_2
			\DefinedAs \parens{V_{2, 1}, V_{2,2}, \ldots, V_{2,k}}$ where $f(P_2) = f(P_1)$.  
         Since removing $x_1$ from $A_1$ decreased $b_{1}(G[V_{1}])$, minimality of
			$b(P_1)$ implies that $x_1$ is in a $2$-bad component $A_2$ in $V_{2,2}$.			
			Now, we may choose $x_2 \in
			V(\mov{A_2}{d}) - \set{x_1}$ having a neighbor in $\mov{A_2}{d}$ and move
			$x_2$ from $V_{2, 2}$ to $V_{2, 1}$ to get a new partition $P_3
			\DefinedAs \parens{V_{3, 1}, V_{3,2}, \ldots, V_{3,k}}$ where $f(P_3) =
			f(P_1)$.
						
			Continue on this way to construct sequences $A_1, A_2, \ldots$, $P_1, P_2, P_3, \ldots$ and $x_1, x_2, \ldots$.  
			Since $G$ is finite, at some point we will need to reuse a leftover component; that is, 
			there is a smallest $t$ such that $A_{t + 1} - x_t = A_s - x_s$ for some $s <
			t$.  Let $j \in \irange{2}$ be such that in $V(A_s) \subseteq V_{s, j}$. 
			Then $V(A_t) \subseteq V_{t, 3-j}$.  Note that, since $A_s$ is $r_j$-regular,
			$N(x_t) \cap V(A_s - x_s) = N(x_s) \cap V(A_s - x_s)$.
			
			We claim that $s = 1$, $t = 2$, both $A_s$ and $A_t$ are complete,
			$\mov{A_s}{d}$ is joined to $A_t - x_{t-1}$ and $\mov{A_t}{d}$ is joined to $A_s - x_s$.
			
			Put $X \DefinedAs N(x_s) \cap V(\mov{A_s}{d})$.  Since $x_s$ witnesses the
			$j$-badness of $A_s$, $\card{X} \geq 1$. Pick $z \in X$.
			In $P_s$, move $z$ to $V_{s, 3-j}$ to get a new partition $P^\gamma \DefinedAs \parens{V_{\gamma,
			1}, V_{\gamma, 2}, \ldots, V_{\gamma, k}}$. Then $z$ must create an
			$r_{3-j}$-obstruction with $A_t - x_{t-1}$ in $V_{\gamma, 3-j}$ since
			$z$ is adjacent to $x_t$.  
			In particular, $N(z) \cap V(A_t - x_{t-1}) = N(x_{t-1}) \cap V(A_t -
			x_{t-1})$. Since $z$ is adjacent to $x_t$, so is $x_{t-1}$. 
			
			In $P^\gamma$, move $x_t$ to $V_{\gamma, j}$ to
			get a new partition $P^{\gamma*} \DefinedAs \parens{V_{\gamma*, 1},
			V_{\gamma*, 2}, \ldots, V_{\gamma*, k}}$. Then $x_t$ must create an
			$r_{j}$-obstruction with $A_s - z$ in $V_{\gamma*, j}$.  In
			particular, $N(z) \cap V(A_s - z) = N(x_t) \cap V(A_s - z)$.  Thus $x_s$
			is adjacent to $x_t$ and we have $N[z] \cap V(A_s) = N[x_s] \cap
			V(A_s)$.  Thus, if $A_s$ is an odd cycle, it must be a triangle.  
			Hence $A_s$ is complete.  Also, since $x_s$ is adjacent to $x_t$, using $x_s$
			in place of $z$ in the previous paragraph, we conclude that $\mov{A_s}{d}$
			is joined to $N(x_{t-1}) \cap V(A_t - x_{t-1})$ and $x_s = x_{t-1}$.  
			
			Suppose $s > 1$.  Then $x_{s-1}$ is joined to $N(x_{t-1}) \cap V(A_t -
			x_{t-1})$ and hence $A_t - x_{t-1} = A_{s - 1} - x_{s-1}$ violating
			minimality of $t$.  Whence, $s = 1$.  
				
			In $P_{t+1}$, move $z$ to $V_{t+1, 3-j}$ to
			get a new partition $P^{\beta} \DefinedAs \parens{V_{\beta, 1},
			V_{\beta, 2}, \ldots, V_{\beta, k}}$. Then $z$ must create an
			$r_{3-j}$-obstruction with $A_t - x_t$ in $V_{\beta, 3-j}$.  In
			particular, $N(z) \cap V(A_t - x_t) = N(x_t) \cap V(A_t - x_t)$.  Since $N(z)
			\cap V(A_t - x_{t-1}) = N(x_{t-1}) \cap V(A_t - x_{t-1})$, we have
			$N[x_{t-1}] \cap V(A_t) = N(z) \cap V(A_t) = N[x_t] \cap V(A_t)$.  
			Thus $A_t$ is complete.  Thus $\mov{A_s}{d}$ is joined to $A_t - x_{t-1}$.  To see that 	
			$\mov{A_t}{d}$ is joined to $A_s - x_s$, consider $P_t$ and moving any vertex in $\mov{A_t}{d}$ to $V_{t, j}$.
						
			Therefore $s = 1$, $t = 2$, both $A_s$ and $A_t$ are complete,
			$\mov{A_s}{d}$ is joined to $A_t - x_{t-1}$ and $\mov{A_t}{d}$ is joined to
			$A_s - x_s$.  But we can play the same game with $V_1$ and $V_i$ for any
			$3 \leq i \leq k$ as well.  Let $B_1 \DefinedAs A_1$, $B_2 \DefinedAs A_2$
			and for $i \geq 3$, let $B_i$ be the $r_i$-obstruction made by moving
			$x_1$ into $V_i$.  Then $B_i$ is complete for each $i \in \irange{k}$. 
			Applying what we just proved to all pairs $B_i, B_j$ shows that for any
			distinct $i, j \in \irange{k}$, $\mov{B_i}{d}$ is joined to $B_j - x_1$. 
			Put $F_1 = B_1$ and $F_i = B_i - x_1$ for $i \geq 2$.  
			Let $Q$ be the union of the $F_i$.  Then (a), (b) and (c) are satisfied.
			Note that $\card{Q} = \wt{\vec{r}}+1$ and since any $v \in B_1^d$ is universal in $Q$,
			$\card{Q} \leq d + 1$. By assumption $\wt{\vec{r}} \geq d$, whence $\wt{\vec{r}}=d$.  
			Hence, if (2) fails, then (1) holds.
\end{proof}

		\begin{thm}\label{DegenProp}
			Let $G$ be a graph, $k,d \in \IN$ with $k \geq 2$ and $\vec{r} \in \IN_{\geq 1}^k$ where at most one of the $r_i$ is one.  If $\wt{\vec{r}} \geq \max\set{\Delta(G) + 1 - k, d}$, then at least one of the following holds:
			\begin{enumerate}
			  \item $\wt{\vec{r}} = d$ and $G$ contains a $\join{K_t}{E_{d+1-t}}$ where $t \geq d +
			  1 - k$, for each $v \in V(K_t)$ we have $d_G(v) = d$ and for each $v \in
			  V(E_{d+1-t})$ we have $d_G(v) > d$; or,
			  \item there exists an $\vec{r}$-partition $P \DefinedAs \parens{V_1, \ldots, V_k}$ of 	
$G$ such that if $C$ is an $r_i$-regular component of $G[V_i]$, then $\delta_G(C) \geq d$ and
			  there is at most one $x \in V(\mov{C}{d})$ with $d_{\mov{C}{d}}(x) \geq
			  r_i -1$.  Moreover, $P$ can be chosen so that either: 
			  \begin{enumerate}
			    \item for all $i \in \irange{k}$ and $r_i$-regular component $C$ of
			    $G[V_i]$, we have $\card{\mov{C}{d}} \leq 1$; or,
			  	\item  for some $i \in \irange{k}$ and some $r_i$-regular component $C$ of
			  	$G[V_i]$, there is $x \in V(\mov{C}{d})$ such that
			  	$\setb{y}{N_C(x)}{d_G(y) = d}$ is a clique.
			  \end{enumerate}
			\end{enumerate}
		\end{thm}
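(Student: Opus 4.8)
The plan is to imitate the proof of Theorem~\ref{PartitionTheorem} step for step, with ``$r_i$-regular component of $G[V_i]$'' playing the role of ``$r_i$-obstruction in $G[V_i]$'' there, and then to append a second, finer minimization to secure the dichotomy in the ``moreover'' clause. \emph{Setup.} As before, $\wt{\vec{r}} \ge \Delta(G) + 1 - k$ and Lov\'asz's lemma give $\Delta(G[V_i]) \le r_i$ for every $\vec{r}$-partition $P = (V_1, \dots, V_k)$. If $x \in V_i$ has $d_{V_i}(x) \ge r_i$ then $d_G(x) \ge d$ --- otherwise a count using $\wt{\vec{r}} \ge d$ produces a part $V_j$ ($j \ne i$) with $d_{V_j}(x) \le r_j - 1$, and moving $x$ there lowers $f$, contradicting minimality; and if moreover $d_G(x) = d$ then some $V_j$ ($j \ne i$) has $d_{V_j}(x) \le r_j$ and moving $x$ there preserves $f$. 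Two consequences used throughout: in any $\vec{r}$-partition a vertex of an $r_i$-regular component $C$ of $G[V_i]$ has $d_G \ge d$, so $\delta_G(C) \ge d$ in~(2) is free; and a legal move of an $x \in V(\mov{C}{d})$ out of $C$ leaves the connected closed remnant $C - x$ non-regular (a neighbour of $x$ drops to degree $r_i - 1$) and lands $x$ in its new part with part-degree exactly $r_j$.

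\emph{First game.} Call a component $C$ of $G[V_i]$ \emph{$i$-bad} if it is $r_i$-regular and $\mov{C}{d}$ contains two vertices of degree $\ge r_i - 1$ in $\mov{C}{d}$ --- precisely the negation of the ``at most one $x$'' clause of~(2). Let $b(P)$ count $i$-bad components over all $i$, take an $\vec{r}$-partition minimizing $b(P)$, and suppose $b(P) > 0$. Seeding at an $i$-bad component $C$ and a vertex $x$ of $\mov{C}{d}$ of degree $\ge r_i - 1$ there (so $x$ has $\ge r_i - 1$ neighbours in $\mov{C}{d}$), I run the move-and-chase of Theorem~\ref{PartitionTheorem}: each move kills a bad component, so by minimality the moved vertex falls into a new bad component, where I pick a fresh high-degree vertex of its $d$-movable subgraph, and so on until the remnants $A_m - x_m$ repeat. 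The closed-walk analysis runs as there, with one new ingredient: for the recurring $A_s$, the argument of Theorem~\ref{PartitionTheorem} gives $N[z]\cap V(A_s) = N[x_s]\cap V(A_s)$ for every $z \in N(x_s)\cap V(\mov{A_s}{d})$, and there are now $\ge r_i - 1$ such $z$; these together with $x_s$ are mutual true twins and, with their one remaining common neighbour, span a $K_{r_i+1}$ that is already a component, hence $= A_s$. Likewise the analogous components $B_1,\dots,B_k$ (one per part) are cliques $K_{r_i+1}$, pairwise joined through their $d$-movable subgraphs and with $\card{\mov{B_i}{d}} \ge r_i$. Taking $Q$ to be the union of the $B_i$ as in Theorem~\ref{PartitionTheorem}, $Q$ has $\wt{\vec{r}}+1$ vertices, $Y := \bigcup_i V(\mov{B_i}{d})$ is a clique of $G$-degree-$d$ vertices universal in $Q$, each $B_i$ contributes at most $(r_i+1) - r_i = 1$ vertex to $Z := V(Q)\setminus Y$, and each vertex of $Z$, lying in a clique component of minimum $G$-degree $\ge d$ but not of $G$-degree $d$, has $d_G > d$; universality forces $\card{Q} \le d+1$, so $\wt{\vec{r}} = d$, $\card{Q} = d+1$, $t := \card{Y} \ge d+1-k$, and $Q \supseteq \join{K_t}{E_{d+1-t}}$ as in~(1). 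Hence if~(1) fails then $b(P) = 0$, and the ``at most one $x$'' clause of~(2) holds for $P$.

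\emph{The dichotomy.} Among $\vec{r}$-partitions with $b(P) = 0$ minimize $c(P)$, the number of $r_i$-regular components $C$ with $\card{\mov{C}{d}} \ge 2$; if $c(P) = 0$ we are in~(a). Otherwise, if for some $r_i$-regular $C$ and some $x \in V(\mov{C}{d})$ the set $\setb{y}{N_C(x)}{d_G(y) = d}$ is a clique, we are in~(b); and if not, pick a $C$ with $\card{\mov{C}{d}} \ge 2$ (available since $c(P) > 0$) and an $x \in V(\mov{C}{d})$ with at most $r_i - 2$ neighbours in $\mov{C}{d}$ (available since $b(P) = 0$, so $\ge 2$ neighbours of $x$ in $C$ lie outside $\mov{C}{d}$), and run the chase seeded at $x$ against the pair $(b(P), c(P))$ ordered lexicographically: each move destroys an $r_i$-regular component, so unless some move produces a lexicographic decrease (contradicting minimality) or raises $b$ above $0$ and lands us in the hypothesis of the first game (whence~(1)), the remnants repeat and the same wrap-around yields~(1).

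\emph{Main obstacle.} The heart of the matter is the wrap-around when the obstructions are arbitrary $r_i$-regular components rather than cliques or odd cycles, where rigidity is no longer automatic: the remedy is to promote \emph{all} $\ge r_i - 1$ movable neighbours of the recurring vertex to its true twins --- exactly what the strengthened notion of $i$-bad buys --- so that a regular clique surfaces as a full component; the bound $t \ge d+1-k$ and the degree split of $Q$ then drop out of counting. The other point needing care is the last step: checking that failing~(b) genuinely forces one of the three stated alternatives, rather than some configuration escaping both the first game and a lexicographic decrease, and confirming where the hypothesis that at most one $r_i$ equals $1$ is actually invoked.
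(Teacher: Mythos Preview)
Your sketch identifies the right skeleton --- a Catlin-type move-and-chase minimizing a bad-component count, followed by a finer minimization for the dichotomy --- and your ``mutual true twins'' observation is exactly what forces the recurring regular component to collapse to a clique. But two steps do not go through as written.

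\textbf{First game: you cannot simply import the swaps from Theorem~\ref{PartitionTheorem}.} There the obstructions are $K_{r_j+1}$ (or odd cycles), so when a moved vertex lands in a bad component its $r_j$ new neighbours are automatically pinned down, and the identities $N(z)\cap V(A_t-x_{t-1}) = N(x_{t-1})\cap V(A_t-x_{t-1})$ follow from the clique structure alone. Here a bad component is an arbitrary $r_j$-regular graph, so when you move $z$ into $V_{s,3-j}$ you must know that its new bad component is \emph{exactly} $(A_t-x_{t-1})\cup\{z\}$; otherwise the neighbourhood comparison, and hence your twin argument, has no footing. The paper secures this by minimizing the total component count $c(P)=\sum_i c(G[V_i])$ \emph{before} $b(P)$: then every moved vertex joins a single existing component, the remnants $A_m-x_m$ remain whole components throughout, and the swap bookkeeping is clean. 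This is also where the ``at most one $r_i=1$'' hypothesis is cashed in: when $r_j=1$ the swap ``now move $x_t$ back to $V_{\gamma,j}$'' degenerates (there is nothing of $A_s-z$ left to join), so the paper runs the symmetric swap on the $r_{3-j}\ge 2$ side instead.

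\textbf{The dichotomy: the appeal to the first game is circular.} You restrict to $b(P)=0$ and minimize the ugly-count; then, if a move in the second chase raises $b$ above $0$, you say you are ``in the hypothesis of the first game, whence~(1)''. But the first game only yields ``(1) \emph{or} some $\vec r$-partition has $b=0$'', and you already hold such a partition --- nothing new follows. Nor have you shown how the failure of~(b) enters. The paper's remedy is to allow $b(Q)\le 1$ (not $=0$) in the secondary minimization, so the chase may legitimately carry a single bad component; the wrap-around then splits on whether the recurring $D_s$ and $D_t$ are bad or merely ugly. When $D_t$ is merely ugly, one still gets $x_s=x_{t-1}$ and a $z\in N(x_s)\cap V(\mov{D_s}{d})$ adjacent to all of $N_{D_t}(x_s)$; if two degree-$d$ neighbours $w_1,w_2$ of $x_s$ in $D_t$ were nonadjacent, swapping both with $z$ would strictly drop $f$ --- so $\{y\in N_{D_t}(x_s):d_G(y)=d\}$ is a clique and (b) holds. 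When both $D_s$ and $D_t$ are bad, the first-game wrap-around now applies legitimately and gives~(1). Your sketch omits this case split, and with it the actual derivation of~(b).
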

		\begin{proof}
			For $i \in \irange{k}$, call a connected graph $C$
			\emph{$i$-bad} if $C$ is $r_i$-regular and there are at least two $x \in
			V(\mov{C}{d})$ with $d_{\mov{C}{d}}(x) \geq r_i - 1$. For a graph $H$ and $i
			\in \irange{k}$, let $b_i(H)$ be the number of $i$-bad components of $H$.
			For an $\vec{r}$-partition $P \DefinedAs \parens{V_1, \ldots, V_k}$ of $G$ let
			\[c(P) \DefinedAs \sum_{i \in \irange{k}} c(G[V_i]),\]
			\[b(P) \DefinedAs \sum_{i\in \irange{k}} b_i(G[V_i]).\]
			
			\noindent Let $P \DefinedAs \parens{V_1, \ldots, V_k}$ be an $\vec{r}$-partition of
			$V(G)$ minimizing $c(P)$ and subject to that $b(P)$.

			Let $i \in \irange{k}$ and $x \in V_i$ with $d_{V_i}(x) \geq r_i$.  Suppose $d_G(x) = d$.
			Then, since $\wt{\vec{r}} \geq d$, for every $j \neq i$ we have $d_{V_j}(x) \leq
			r_j$. Moving $x$ from $V_i$ to $V_j$ gives a new partition $P^*$ with $f(P^*)
			\leq f(P)$. Note that if $d_{G}(x) < d$ we would have $f(P^*) < f(P)$
			contradicting the minimality of $P$.

			Suppose $b(P) > 0$.  By symmetry, we may assume that there is a
			$1$-bad component $A_1$ of $G[V_{1}]$. Put $P_1 \DefinedAs P$ and $V_{1,i}
			\DefinedAs V_i$ for $i \in \irange{k}$. Since $A_1$ is $1$-bad we have $x_1
			\in V(\mov{A_1}{d})$ with $d_{\mov{A_1}{d}}(x) \geq r_1 -1$. By the above we
			can move $x_1$ from $V_{1, 1}$ to $V_{1, 2}$ to get a new partition $P_2
			\DefinedAs \parens{V_{2, 1}, V_{2,2}, \ldots, V_{2,k}}$ where $f(P_2) = f(P_1)$.  
			By the minimality of $c(P_1)$, $x_1$ is adjacent to only one component $C_2$
			in $G[V_{1, 2}]$. Let $A_2 \DefinedAs G[V(C_2) \cup \set{x_1}]$.  
			Since removing $x_1$ from $A_1$ decreased $b_{1}(G[V_{1}])$, minimality of
			$b(P_1)$ implies that $A_2$ is $2$-bad. Now, we may choose $x_2 \in
			V(\mov{A_2}{d}) - \set{x_1}$ with $d_{\mov{A_2}{d}}(x) \geq r_2 -1$ and move
			$x_2$ from $V_{2, 2}$ to $V_{2, 1}$ to get a new partition $P_3
			\DefinedAs \parens{V_{3, 1}, V_{3,2}, \ldots, V_{3,k}}$ where $f(P_3) =
			f(P_1)$.
						
			Continue on this way to construct sequences $A_1, A_2, \ldots$, $P_1, P_2, P_3, \ldots$ and $x_1, x_2, \ldots$.  
			Since $G$ is finite, at some point we will need to reuse a leftover component; that is, 
			there is a smallest $t$ such that $A_{t + 1} - x_t = A_s - x_s$ for some $s <
			t$.  Let $j \in \irange{2}$ be such that in $V(A_s) \subseteq V_{s, j}$. 
			Then $V(A_t) \subseteq V_{t, 3-j}$.  Note that, since $A_s$ is $r_j$-regular,
			$N(x_t) \cap V(A_s - x_s) = N(x_s) \cap V(A_s - x_s)$.
			
			We claim that $s = 1$, $t = 2$, both $A_s$ and $A_t$ are complete,
			$\mov{A_s}{d}$ is joined to $A_t - x_{t-1}$ and $\mov{A_t}{d}$ is joined to $A_s - x_s$.
			
			Put $X \DefinedAs N(x_s) \cap V(\mov{A_s}{d})$.  Since $x_s$ witnesses the
			$j$-badness of $A_s$, $\card{X} \geq \max\set{1, r_j - 1}$. Pick $z \in X$.
			In $P_s$, move $z$ to $V_{s, 3-j}$ to get a new partition $P^\gamma \DefinedAs \parens{V_{\gamma,
			1}, V_{\gamma, 2}, \ldots, V_{\gamma, k}}$. Then $z$ must create an
			$r_{3-j}$-regular component with $A_t - x_{t-1}$ in $V_{\gamma, 3-j}$ since
			$z$ is adjacent to $x_t$.  
			In particular, $N(z) \cap V(A_t - x_{t-1}) = N(x_{t-1}) \cap V(A_t -
			x_{t-1})$. Since $z$ is adjacent to $x_t$, so is $x_{t-1}$. 
			
			Suppose $r_j \geq 2$. In $P^\gamma$, move $x_t$ to $V_{\gamma, j}$ to
			get a new partition $P^{\gamma*} \DefinedAs \parens{V_{\gamma*, 1},
			V_{\gamma*, 2}, \ldots, V_{\gamma*, k}}$. Then $x_t$ must create an
			$r_{j}$-regular component with $A_s - z$ in $V_{\gamma*, j}$.  In
			particular, $N(z) \cap V(A_s - z) = N(x_t) \cap V(A_s - z)$.  Thus $x_s$
			is adjacent to $x_t$ and we have $N[z] \cap V(A_s) = N[x_s] \cap
			V(A_s)$. Put $K \DefinedAs X \cup \set{x_s}$.  Then $\card{K} \geq r_j$ and
			$K$ induces a clique.  If $\card{K} > r_j$, then $A_s = K$ is complete. 
			Otherwise, the vertices of $K$ have a common neighbor $y \in V(A_s) - K$ and
			again $A_s$ is complete. Also, since $x_s$ is adjacent to $x_t$, using $x_s$
			in place of $z$ in the previous paragraph, we conclude that $K$
			is joined to $N(x_{t-1}) \cap V(A_t - x_{t-1})$ and $x_s = x_{t-1}$.  
			
			Suppose $s > 1$.  Then $x_{s-1}$ is joined to $N(x_{t-1}) \cap V(A_t -
			x_{t-1})$ and hence $A_t - x_{t-1} = A_{s - 1} - x_{s-1}$ violating
			minimality of $t$.  Whence, if $r_j \geq 2$ then $s = 1$.  
			
			Note that $K = V(\mov{A_s}{d})$ and hence if $r_j \geq 2$ then $A_s$ is
			complete and $\mov{A_s}{d}$ is joined to $N(x_{t-1}) \cap V(A_t - x_{t-1})$.  If $r_{3-j}
			= 1$, then $A_t$ is a $K_2$ and $N(x_{t-1}) \cap V(A_t - x_{t-1}) = V(A_t -
			x_{t-1}) = \set{x_t}$.  We already know that $x_t$ is joined to $A_s - x_s$. 
			Thus the cases when $r_j \geq 2$ and $r_{3-j} = 1$ are taken care of. By
			assumption, at least one of $r_j$ or $r_{3-j}$ is at least two.  Hence it
			remains to handle the cases with $r_{3-j} \geq 2$.
			
			Suppose $r_{3-j} \geq 2$.  In $P_{t+1}$, move $z$ to $V_{t+1, 3-j}$ to
			get a new partition $P^{\beta} \DefinedAs \parens{V_{\beta, 1},
			V_{\beta, 2}, \ldots, V_{\beta, k}}$. Then $z$ must create an
			$r_{3-j}$-regular component with $A_t - x_t$ in $V_{\beta, 3-j}$.  In
			particular, $N(z) \cap V(A_t - x_t) = N(x_t) \cap V(A_t - x_t)$.  Since $N(z)
			\cap V(A_t - x_{t-1}) = N(x_{t-1}) \cap V(A_t - x_{t-1})$, we have
			$N[x_{t-1}] \cap V(A_t) = N(z) \cap V(A_t) = N[x_t] \cap V(A_t)$.  Put $W
			\DefinedAs N[x_t] \cap V(\mov{A_t}{d})$. Each $w \in W$ is adjacent to $z$
			and running through the argument above with $w$ in place of $x_t$ shows
			that $W$ is a clique joined to $z$.  Moreover, since $x_t$ witnesses the
			$(3-j)$-badness of $A_t$, $\card{W} \geq r_{3-j}$.  As with $A_s$ above, we
			conclude that $A_t$ is complete.  Since $x_s \in V_{t+1, 3-j}$ and $x_s$ is
			adjacent to $z$, it must be that $x_s \in V(A_t - x_t)$.  Thence $x_s$ is
			joined to $W$ and $x_s = x_{t-1}$.  
			
			Suppose that $r_j \geq 2$ as well.  We know that $s = 1$, $A_s$ is
			complete and $\mov{A_s}{d}$ is joined to $N(x_{t-1}) \cap V(A_t - x_{t-1})
			= A_t - x_{t-1}$.  Also, we just showed that $A_t$ is complete and
			$\mov{A_t}{d}$ is joined to $A_s - x_s$.
						
			Thus, we must have $r_j = 1$ and $r_{3-j} \geq 2$.  Then,
			since $A_s$ is a $K_2$, by the above, $A_s$ is joined to $W$.  Since $W =
			\mov{A_t}{d}$, it only remains to show that $s=1$. Suppose $s > 1$.  Then
			$x_{s-1}$ is joined to $W$ and hence $A_t - x_{t-1} = A_{s - 1} - x_{s-1}$ violating minimality of $t$.
			
			Therefore $s = 1$, $t = 2$, both $A_s$ and $A_t$ are complete,
			$\mov{A_s}{d}$ is joined to $A_t - x_{t-1}$ and $\mov{A_t}{d}$ is joined to
			$A_s - x_s$.  But we can play the same game with $V_1$ and $V_i$ for any
			$3 \leq i \leq k$ as well.  Let $B_1 \DefinedAs A_1$, $B_2 \DefinedAs A_2$
			and for $i \geq 3$, let $B_i$ be the $r_i$-regular component made by moving
			$x_1$ into $V_i$.  Then $B_i$ is complete for each $i \in \irange{k}$. 
			Applying what we just proved to all pairs $B_i, B_j$ shows that for any
			distinct $i, j \in \irange{k}$, $\mov{B_i}{d}$ is joined to $B_j - x_1$. 
			Since $\card{\mov{B_i}{d}} \geq r_i$ and $x_1 \in V(\mov{B_i}{d})$ for each
			$i$, this gives a $\join{K_t}{E_{\wt{\vec{r}} + 1 - t}}$ in $G$ where $t \geq \wt{\vec{r}} + 1 -
			k$.  Take such a subgraph $Q$ maximizing $t$.  Since all the $B_i$ are
			complete, any vertex of degree $d$ will be in $\mov{B_i}{d}$; therefore, for each $v
			\in V(K_t)$ we have $d_G(v) = d$ and for each $v \in V(E_{\wt{\vec{r}}+1-t})$ we have $d_G(v) > d$.
			Note that $\card{Q} = \wt{\vec{r}}+1$ and since $d_G(v) = d$ for any $v \in V(K_t)$,
			$\card{Q} \leq d + 1$. By assumption $\wt{\vec{r}} \geq d$, whence $\wt{\vec{r}}=d$.  
			Thus if (1) fails, then	the first part of (2) holds.
			
			It remains to prove that we can choose $P$ to satisfy one of (a) or (b).  Suppose that (1) fails and $P$ cannot be chosen to satisfy either (a) or (b).  For $i \in \irange{k}$, call a connected graph $C$
			\emph{$i$-ugly} if $C$ is $r_i$-regular and $\card{\mov{C}{d}} \geq 2$ let $u_i(H)$ be the number of $i$-ugly components of $H$.  Note that if $C$ is $i$-bad, then it is $i$-ugly.  For an $\vec{r}$-partition $P \DefinedAs \parens{V_1, \ldots, V_k}$ of $G$ let
\[u(P) \DefinedAs \sum_{i\in \irange{k}} u_i(G[V_i]).\]

Choose an $\vec{r}$-partition $Q \DefinedAs \parens{V_1, \ldots, V_k}$ of $G$ first minimizing $c(Q)$, then subject to that requiring $b(Q) \leq 1$ and then subject to that minimizing $u(Q)$.  Since $Q$ does not satisfy (a), at least one of $b(Q) = 1$ or $u(Q) \geq 1$ holds.  By symmetry, we may assume that $G[V_1]$ contains a component $D_1$ which is either $1$-bad or $1$-ugly (or both).  If $D_1$ is $1$-bad, pick $w_1 \in V(D_1^d)$ witnessing the $1$-badness of $D_1$; otherwise pick $w_1 \in V(D_1^d)$ arbitrarily. Move $w_1$ to $V_2$, to form a new $\vec{r}$-partition.  This new partition still satisfies all of our conditions on $Q$. As above we construct a sequence of vertex moves that will wrap around on itself. This can be defined recursively as follows.  For $t \geq 2$, if $D_t$ is bad pick $w_t \in V(D_t^d - w_{t-1})$ witnessing the badness of $D_t$; otherwise, if $D_t$ is ugly pick $w_t \in V(D_t^d - w_{t-1})$ arbitrarily.  Now move $w_t$ to the part from which $w_{t-1}$ came to form $D_{t+1}$.  Let $Q_1 \DefinedAs Q, Q_2, Q_3, \ldots$ be the partitions created by a run of this process. Note that the process can never create a component which is not ugly lest we violate the minimality of $u(Q)$.  

Since $G$ is finite, at some point we will need to reuse a leftover component; that is, 
there is a smallest $t$ such that $D_{t + 1} - x_t = D_s - x_s$ for some $s <
t$.  First, suppose $D_s$ is not bad, but merely ugly.  Then $D_{t+1}$ is not bad and hence $b(Q_{t+1}) = 0$ and $u(Q_{t+1}) < u(Q)$, a contradiction.  Hence $D_s$ is bad.  

Suppose $D_t$ is not bad.  
As in the proof of the first part of (2), we can conclude that $x_s = x_{t-1}$.  
Pick $z \in N(x_s) \cap V(\mov{D_s}{d})$. 
Since $z$ is adjacent to $x_t$, by moving $z$ to the part containing $x_t$ in $P_s$ we conclude 
$N(z) \cap V(D_t - x_s) = N(x_s) \cap V(D_t - x_s)$.  
Put $T \DefinedAs \setb{y}{N_{D_t}(x_s)}{d_G(y) = d}$. 
Suppose $T$ is not a clique and let $w_1, w_2 \in T$ be nonadjacent.  
Now, in $P_t$, since $z$ is adjacent to both $w_1$ and $w_2$, swapping $w_1$ and $w_2$ with $z$ contradicts minimality of $f(Q)$.  
Hence $T$ is a clique and (b) holds, a contradiction.

Thus we may assume that $D_t$ is bad as well.  
Now we may apply the same argument as in the proof of the first part of (2) to show that (1) holds.  This final contradiction completes the proof.

		\end{proof}

		\begin{cor}[Borodin \cite{borodin1976decomposition}]
		Let $G$ be a graph not containing a $K_{\Delta(G) + 1}$. If $r_1, r_2 \in
		\IN_{\geq 1}$ with $r_1 + r_2 \geq \Delta(G) \geq 3$, then $V(G)$ can be
		partitioned into sets $V_1, V_2$ such that $\Delta(G[V_i]) \leq r_i$ and $\text{col}(G[V_i]) \leq r_i$ for $i \in \irange{2}$.
		\end{cor}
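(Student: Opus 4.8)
The plan is to apply Theorem~\ref{DegenProp} with $k = 2$, $\vec{r} = (r_1, r_2)$ and $d = \Delta(G)$, and to show that alternative~(1) and sub-alternative~(b) are both impossible under the hypotheses, so that we land in alternative~(2) with sub-alternative~(a); the conclusion is then immediate.

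First I would check the hypotheses. Since $r_1 + r_2 \geq \Delta(G) \geq 3$, at most one of $r_1, r_2$ equals $1$, and $\wt{\vec{r}} = r_1 + r_2 \geq \Delta(G) = \max\set{\Delta(G) + 1 - 2, \Delta(G)}$, so Theorem~\ref{DegenProp} applies. Alternative~(1) would produce a $\join{K_t}{E_{d+1-t}}$ in $G$ in which every vertex of $E_{d+1-t}$ has degree strictly greater than $d = \Delta(G)$; since $\Delta(G)$ is the maximum degree, this forces $d + 1 - t = 0$, so $G$ would contain $K_{t} = K_{\Delta(G)+1}$, contrary to hypothesis. Hence alternative~(2) holds and gives an $\vec{r}$-partition $P = (V_1, V_2)$ with the stated properties. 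Because $\wt{\vec{r}} \geq \Delta(G) + 1 - 2$, Lov\'asz's theorem (stated in Section~2) already guarantees $\Delta(G[V_i]) \leq r_i$ for $i \in \irange{2}$, so only the bound $\text{col}(G[V_i]) \leq r_i$ remains.

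For that bound it suffices to show $G[V_i]$ has no $r_i$-regular component: a graph of maximum degree $\leq r_i$ with no $r_i$-regular component is $(r_i - 1)$-degenerate (an induced subgraph of minimum degree $\geq r_i$ would be $r_i$-regular and, its vertices having no further neighbours in $G[V_i]$, a union of $r_i$-regular components), hence has coloring number $\leq r_i$. So suppose $C$ is an $r_i$-regular component of $G[V_i]$. Since $d = \Delta(G)$, the property $\delta_G(C) \geq d$ from alternative~(2) forces $d_G(v) = \Delta(G)$ for every $v \in V(C)$, so $\mov{C}{d}$ is precisely the set of non-cutvertices of $C$. I would first rule out sub-alternative~(b): if such a $C$ had $x \in V(\mov{C}{d})$ with $\setb{y}{N_C(x)}{d_G(y) = d}$ a clique, then $N_C(x)$ itself (all of $C$ having $G$-degree $d$) is a clique of size $r_i$, so $\set{x} \cup N_C(x)$ is a $K_{r_i+1}$ whose every vertex has $r_i$ neighbours inside it and at most $r_i$ neighbours in $V_i$; thus it is an entire component, i.e. $C = K_{r_i+1}$. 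But then $\mov{C}{d} = V(C)$ has $r_i + 1 \geq 2$ vertices, each of degree $r_i \geq r_i - 1$ within $\mov{C}{d}$, contradicting the clause of~(2) that at most one vertex of $\mov{C}{d}$ has $\mov{C}{d}$-degree at least $r_i - 1$. Therefore $P$ can be chosen to satisfy sub-alternative~(a): $\card{\mov{C}{d}} \leq 1$ for every $r_i$-regular component $C$ of $G[V_i]$. As a connected graph on at least two vertices has at least two non-cutvertices, this forces $\card{V(C)} \leq 1$, impossible for an $r_i$-regular graph with $r_i \geq 1$. Hence no $r_i$-regular component exists, and $G[V_i]$ satisfies $\Delta(G[V_i]) \leq r_i$ and $\text{col}(G[V_i]) \leq r_i$.

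The hard part will be seeing that sub-alternative~(b) cannot actually arise here: the clique it supplies is forced to be a whole $K_{r_i+1}$ component, which clashes with the ``at most one movable vertex'' clause of alternative~(2). Choosing $d = \Delta(G)$ is what makes both this and the treatment of alternative~(1) go through cleanly; the remaining steps are routine applications of Lov\'asz's theorem together with elementary facts about non-cutvertices and degeneracy.
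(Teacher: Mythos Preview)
Your proposal is correct and follows essentially the same route as the paper: apply Theorem~\ref{DegenProp} with $k=2$, $\vec{r}=(r_1,r_2)$, $d=\Delta(G)$, rule out alternative~(1) via the degree condition, and then argue that an $r_i$-regular component would contradict whichever of~(a) or~(b) holds. The only cosmetic difference is the order: the paper first shows any regular component has $\card{\mov{C}{d}}\geq 2$ (so~(a) fails), then derives $K_{r_i+1}$ from~(b) and notes this violates the main clause of~(2); you dispose of~(b) first and then~(a).
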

		\begin{proof}
		Apply Proposition \ref{DegenProp} with $\vec{r} \DefinedAs \parens{r_1, r_2}$
		and $d = \Delta(G)$.  Since $G$ doesn't contain a $K_{\Delta(G) + 1}$ and no
		vertex in $G$ has degree larger than $d$, (1) cannot hold.  Thus (2) must
		hold.  Let $P \DefinedAs (V_1, V_2)$ be the guaranteed partition and suppose
		that for some $j \in \irange{2}$, $G[V_j]$ contains an $r_j$-regular component
		$H$.  Then every vertex of $H$ has degree $d$ in $G$ and
		hence $\mov{H}{d}$ contains all noncutvertices of $H$.  But $H$ has maximum
		degree $r_j$ and thus contains at least $r_j$ noncutvertices.  If $r_j = 1$, then $H$ is $K_2$ and hence has $2$ noncutvertices. In any case,
		we have $\card{\mov{H}{d}} \geq 2$.  Hence (a) cannot hold for $P$.  Thus, by (b),
		we have $i \in \irange{2}$, an $r_i$-regular component $C$ of $G[V_i]$ and $x
		\in V(C)$ such that $N_C(x)$ is a clique.  But then $C$ is $K_{r_i + 1}$
		violating (2), a contradiction.
		
		Therefore, for $i \in \irange{2}$, each component of $G[V_i]$ contains a
		vertex of degree at most $r_i - 1$.  Whence $\text{col}(G[V_i]) \leq r_i$ for
		$i \in \irange{2}$.
		\end{proof}

	\section{Coloring}
	Using Theorem \ref{PartitionTheorem}, we can prove coloring results for graphs
	with only small cliques among the vertices of high degree. To make this
	precise, for $d \in \IN$ define $\omega_d(G)$ to be the size of the largest
	clique in $G$ containing only vertices of degree larger than $d$; that is, $\omega_d(G)
	\DefinedAs \omega\parens{G\brackets{\setbs{v \in V(G)}{d_G(v) > d}}}$.
	
	\begin{cor}\label{FirstColoringCorollary}
	Let $G$ be a graph, $k,d \in \IN$ with $k \geq 2$ and $\vec{r} \in \IN^k$.  If
	$\wt{\vec{r}} \geq \max\set{\Delta(G) + 1 - k, d}$ and $r_i \geq \omega_d(G)
	+ 1$ for all $i \in \irange{k}$, then at least one of the following holds:
			\begin{enumerate}
           \item $\wt{\vec{r}} = d$ and $G$ contains an induced subgraph $Q$ with $\card{Q} = d+1$ which can be partitioned into $k$ cliques $F_1, \ldots, F_k$ where 
					\begin{enumerate}
					\item $\card{F_1} = r_1 + 1$, $\card{F_i} = r_i$ for $i \geq 2$,
					\item $\card{F_i^d} \geq \card{F_i} - \omega_d(G)$ for $i \in \irange{k}$,
					\item for $i \in \irange{k}$, each $v \in V(F_i^d)$ is universal in $Q$;
					\end{enumerate}
			  \item $\chi(G) \leq \wt{\vec{r}}$. 
			\end{enumerate}
	\end{cor}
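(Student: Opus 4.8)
The plan is to apply Theorem~\ref{PartitionTheorem} to $G$ with the given $k$, $d$ and $\vec{r}$, and then to refine whichever of its two alternatives holds. Throughout I assume $r_i \ge 2$ for every $i$ (this is needed so that Brooks' theorem applies below, and it is automatic once $\omega_d(G) \ge 1$; the remaining case $\omega_d(G) = 0$ forces $\Delta(G) \le d \le \wt{\vec{r}}$ and I would treat it separately). Since $r_i \ge \omega_d(G) + 1 \ge 2$, Theorem~\ref{PartitionTheorem} applies, and either its conclusion (1) or its conclusion (2) holds.

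\emph{Case: Theorem~\ref{PartitionTheorem}(2) holds}, giving an $\vec{r}$-partition $P = (V_1, \ldots, V_k)$. I would first show that no $G[V_i]$ contains an $r_i$-obstruction. If $C$ were such an obstruction, then by the theorem $\delta_G(C) \ge d$ and $\mov{C}{d}$ is edgeless; since every vertex of $C$ has $G$-degree at least $d$, each vertex of $C$ has $G$-degree either $d$ or $>d$, and the degree-$d$ vertices of $C$ contain $V(\mov{C}{d})$. If $C = K_{r_i+1}$, then $V(\mov{C}{d})$ is an independent set in the clique $C$, hence a single vertex, so the remaining $\ge r_i$ vertices of $C$ form a clique all of whose members have degree $>d$ --- impossible since $r_i \ge \omega_d(G) + 1$. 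If $C$ is an odd cycle (so $r_i = 2$, whence $\omega_d(G) \le 1$), then the degree-$d$ vertices of $C$ form an independent set (as $\mov{C}{d}$ is edgeless) and the degree-$({>}d)$ vertices of $C$ also form an independent set (as $\omega_d(G) \le 1$), making $C$ bipartite --- impossible. Hence $G[V_i]$ has no $r_i$-obstruction. The result of Lov\'asz quoted above (applicable because $\wt{\vec{r}} \ge \Delta(G) + 1 - k$) gives $\Delta(G[V_i]) \le r_i$, so Brooks' theorem yields $\chi(G[V_i]) \le r_i$; coloring the parts with disjoint palettes gives $\chi(G) \le \wt{\vec{r}}$, i.e., conclusion (2) of the corollary.

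\emph{Case: Theorem~\ref{PartitionTheorem}(1) holds}, so $\wt{\vec{r}} = d$ and there is an induced $Q \subseteq G$ on $d+1$ vertices partitioned into cliques $F_1, \ldots, F_k$ satisfying (a), (c), and the weaker bound $\card{F_1^d} \ge 2$, $\card{F_i^d} \ge 1$. Since (a) and (c) are exactly the corollary's requirements, the only remaining task is to improve the bound to $\card{F_i^d} \ge \card{F_i} - \omega_d(G)$. For this I would use the extra fact that the proof of Theorem~\ref{PartitionTheorem} delivers: $\delta_G(Q) \ge d$. Indeed, in that proof each $F_i$ is obtained by deleting at most one vertex from a complete $r_i$-regular component $B_i$ of $G[V_i']$ for some $\vec{r}$-partition $P'$; since $P'$ minimizes $f$, no vertex $v$ of $B_i$ can have $d_G(v) < d$, for then $\sum_{j \ne i} d_{V_j'}(v) = d_G(v) - r_i < d - r_i = \wt{\vec{r}} - r_i = \sum_{j \ne i} r_j$, so some $V_j'$ with $j \ne i$ has $d_{V_j'}(v) < r_j$, and moving $v$ there strictly decreases $f$. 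Granting $\delta_G(Q) \ge d$: each $F_i$ is a clique (on at least two vertices) all of whose members have $G$-degree $\ge d$, so the members of $F_i$ lying outside $F_i^d$ are exactly those of $G$-degree $>d$; they form a clique of degree-$({>}d)$ vertices and hence number at most $\omega_d(G)$. This gives $\card{F_i^d} \ge \card{F_i} - \omega_d(G)$, the missing clause of conclusion (1).

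The step I expect to be the main obstacle is extracting $\delta_G(Q) \ge d$ in the second case: this is not visible from the bare statement of Theorem~\ref{PartitionTheorem} and requires revisiting the complete obstructions produced in its proof (alternatively, one could simply append ``$\delta_G(Q) \ge d$'' to that theorem's conclusion~(1)). The obstruction-elimination in the first case is the other essential ingredient, but once $r_i > \omega_d(G)$ is in hand it is a short clique/bipartiteness argument.
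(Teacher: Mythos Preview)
Your approach is essentially the paper's: apply Theorem~\ref{PartitionTheorem}, and in its case (2) rule out $r_i$-obstructions via the clique/bipartiteness argument (the paper's proof of this case is line-for-line the same, including the $2$-coloring of the odd cycle by ``degree $=d$'' versus ``degree $>d$''). In case (1) you do more than the paper, which simply asserts ``(1) holds'' without verifying the strengthened bound $\card{F_i^d}\ge\card{F_i}-\omega_d(G)$; your observation that $\delta_G(Q)\ge d$ is needed here, and that it comes from the minimality of $f$ in the proof of Theorem~\ref{PartitionTheorem} rather than from its statement, is exactly right and fills in what the paper leaves implicit. Your flag about needing $r_i\ge 2$ (equivalently $\omega_d(G)\ge 1$) to invoke Theorem~\ref{PartitionTheorem} is also a genuine boundary issue the paper glosses over; note, however, that the separate treatment you propose for $\omega_d(G)=0$ does not go through in full generality (e.g., $G=C_5$, $d=2$, $k=2$, $\vec{r}=(1,1)$), so the corollary really should be read with $\vec r\in\IN_{\ge 2}^k$, as in all of the paper's applications.
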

	\begin{proof}
		Apply Theorem \ref{PartitionTheorem} to conclude that either (1) holds or there exists an $\vec{r}$-partition $P \DefinedAs \parens{V_1, \ldots, V_k}$ of 	
$G$ such that if $C$ is an $r_i$-obstruction in $G[V_i]$, then $\delta_G(C) \geq
d$ and $\mov{C}{d}$ is edgeless.  Since $\Delta(G[V_i]) \leq r_i$ for all $i
\in \irange{k}$, it will be enough to show that no $G[V_i]$ contains an
$r_i$-obstruction.  Suppose otherwise that we have an $r_i$-obstruction $C$ in
some $G[V_i]$.  First, if $r_i \geq 3$, then $C$ is $K_{r_i + 1}$ and hence $C$
contains a $K_{\omega_d(G) + 2}$.  But $\mov{C}{d}$ is edgeless, so
$\omega_d(G) > \omega_d(G) + 1$, a contradiction.  Thus $r_i = 2$ and $C$ is an
odd cycle.  Since $\mov{C}{d}$ is edgeless, the vertices of $C$ are $2$-colored
by the properties `degree is $d$' and `degree is greater than $d$',
impossible.
	\end{proof}

	For a vertex critical graph $G$, call $v \in V(G)$	$\emph{low}$ if $d(v) = \chi(G) - 1$ and $\emph{high}$ otherwise. 
	Let $\fancy{H}(G)$ be the subgraph of $G$ induced on the high vertices of $G$.
	
	\begin{cor}\label{SecondColoring}
	Let $G$ be a vertex critical graph with $\chi(G) = \Delta(G) + 2 - k$ for some
	$k \geq 2$.  If $k \leq \frac{\chi(G) - 1}{\omega(\fancy{H}(G)) + 1}$,
	then $G$ contains an induced subgraph $Q$ with $\card{Q} = \chi(G)$ which can be partitioned into $k$ cliques $F_1, \ldots, F_k$ where 
					\begin{enumerate}
					\item $\card{F_1} = \chi(G) - (k-1)(\omega(\fancy{H}(G)) + 1)$, $\card{F_i} = \omega(\fancy{H}(G)) + 1$ for $i \geq 2$;
					\item for each $i \in \irange{k}$, $F_i$ contains at least $\card{F_i} - \omega(\fancy{H}(G))$ low vertices which are all universal in $Q$.
					\end{enumerate}
	\end{cor}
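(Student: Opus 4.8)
The plan is to obtain this as a single application of Corollary \ref{FirstColoringCorollary}, with $d \DefinedAs \chi(G) - 1$ and a carefully chosen weight vector. First I would record the translation of terminology. With $d = \chi(G)-1$, a vertex of $G$ is low precisely when $d_G(v) = d$ and high precisely when $d_G(v) > d$ (using $\delta(G) \geq \chi(G) - 1$, which holds since $G$ is vertex critical), so $\fancy{H}(G) = G\brackets{\setb{v}{V(G)}{d_G(v) > d}}$ and hence $\omega_d(G) = \omega(\fancy{H}(G))$. Also, because $k \geq 2$ we have $\Delta(G) = \chi(G) + k - 2 \geq \chi(G) > d$, so $\fancy{H}(G)$ is nonempty and $\omega(\fancy{H}(G)) \geq 1$.

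Next I would choose the vector. Writing $\omega \DefinedAs \omega(\fancy{H}(G))$, set $r_1 \DefinedAs \chi(G) - 1 - (k-1)(\omega + 1)$ and $r_i \DefinedAs \omega + 1$ for $2 \leq i \leq k$. Then $\wt{\vec{r}} = \chi(G) - 1 = d$, and since $\Delta(G) + 1 - k = \chi(G) - 1 = d$ this yields $\wt{\vec{r}} = \max\set{\Delta(G) + 1 - k, d}$, so the weight hypothesis of Corollary \ref{FirstColoringCorollary} holds (with equality). For $i \geq 2$ we have $r_i = \omega + 1 = \omega_d(G) + 1$; for $i = 1$ the inequality $r_1 \geq \omega_d(G) + 1$ is equivalent to $\chi(G) - 1 \geq k(\omega + 1)$, that is, to the hypothesis $k \leq \frac{\chi(G) - 1}{\omega(\fancy{H}(G)) + 1}$. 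In particular each $r_i \geq \omega + 1 \geq 2$, so $\vec{r} \in \IN^k$ and the corollary applies.

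Finally I would run the corollary. Its alternative (2) would read $\chi(G) \leq \wt{\vec{r}} = \chi(G) - 1$, which is impossible, so alternative (1) holds: $G$ has an induced subgraph $Q$ with $\card{Q} = d + 1 = \chi(G)$, partitioned into cliques $F_1, \ldots, F_k$ with $\card{F_1} = r_1 + 1 = \chi(G) - (k-1)(\omega + 1)$ and $\card{F_i} = r_i = \omega + 1$ for $i \geq 2$; this is exactly item (1) of the present statement. For item (2), part (1)(b) of the corollary gives $\card{F_i^d} \geq \card{F_i} - \omega_d(G) = \card{F_i} - \omega(\fancy{H}(G))$; every vertex $v$ of $F_i^d$ lies in $V(F_i)$ and has $d_G(v) = d = \chi(G) - 1$, hence is low, and by (1)(c) each such $v$ is universal in $Q$. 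Thus each $F_i$ contains at least $\card{F_i} - \omega(\fancy{H}(G))$ low vertices, all universal in $Q$, which is item (2).

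Since the whole argument is a substitution into Corollary \ref{FirstColoringCorollary} followed by bookkeeping with the defining equalities, there is no genuine obstacle. The two points requiring care are the identification $\omega_d(G) = \omega(\fancy{H}(G))$ (which rests on the choice $d = \chi(G)-1$) and the verification that $r_1$ is admissible, which is precisely where the hypothesis $k \leq \frac{\chi(G) - 1}{\omega(\fancy{H}(G)) + 1}$ is consumed.
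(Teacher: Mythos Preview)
Your proposal is correct and follows essentially the same route as the paper: set $d=\chi(G)-1$, take $r_i=\omega(\fancy{H}(G))+1$ for $i\geq 2$ and $r_1=\chi(G)-1-(k-1)(\omega(\fancy{H}(G))+1)$, and apply Corollary~\ref{FirstColoringCorollary}, ruling out alternative~(2) because $\wt{\vec{r}}=\chi(G)-1<\chi(G)$. Your write-up is in fact more careful than the paper's, since you explicitly verify $\omega_d(G)=\omega(\fancy{H}(G))$, check that $r_1\geq \omega_d(G)+1$ consumes the hypothesis on $k$, and note $\omega(\fancy{H}(G))\geq 1$ so that each $r_i\geq 2$.
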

	\begin{proof}
		Suppose $k \leq \frac{\chi(G) - 1}{\omega(\fancy{H}(G)) + 1}$.  Put
		$r_i \DefinedAs \omega(\fancy{H}(G)) + 1$ for $i \in \irange{k} - \set{1}$ and $r_1
		\DefinedAs \chi(G) - 1 - (k-1)(\omega(\fancy{H}(G)) + 1)$.  Set $\vec{r}
		\DefinedAs \parens{r_1, r_2, \ldots, r_k}$.  Then $\wt{\vec{r}} = \chi(G) - 1
		= \Delta(G) + 1 - k$.  Now applying Corollary \ref{FirstColoringCorollary}
		with $d \DefinedAs \chi(G) - 1$ proves the corollary.
	\end{proof}
	
	\begin{cor}\label{ThirdColoring}
	Let $G$ be a vertex critical graph with $\chi(G) \geq \Delta(G) + 1 - p \geq 4$
	for some $p \in \IN$.  If $\omega(\fancy{H}(G)) \leq \frac{\chi(G) + 1}{p + 1} - 2$,
	then $G = K_{\chi(G)}$ or $G = O_5$.
	\end{cor}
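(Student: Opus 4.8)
The plan is to reduce to Corollary \ref{SecondColoring} by way of Brooks' theorem, and then to analyse the dense configuration it produces.

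\emph{Reduction.} If $\Delta(G) \le \chi(G) - 1$ then, since $G$ is vertex critical, $\delta(G) \ge \chi(G)-1$, so $G$ is $(\chi(G)-1)$-regular with $\chi(G) = \Delta(G)+1 \ge 4$; Brooks' theorem gives $K_{\chi(G)} \subseteq G$, and a $\chi(G)$-vertex-critical graph containing $K_{\chi(G)}$ must equal it, so $G = K_{\chi(G)}$. So assume $\Delta(G) \ge \chi(G)$; then $G \ne K_{\chi(G)}$, a vertex of degree $\Delta(G)$ is high, so $\omega \DefinedAs \omega(\fancy{H}(G)) \ge 1$, and from $\chi(G) \ge \Delta(G)+1-p$ we get $p \ge \Delta(G)+1-\chi(G) \ge 1$; combining $\omega \ge 1$ with the hypothesis $\omega \le \frac{\chi(G)+1}{p+1}-2$ also forces $\chi(G) \ge 3p+2 \ge 5$. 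Put $k \DefinedAs \Delta(G)+2-\chi(G) \ge 2$, so $\chi(G) = \Delta(G)+2-k$ and $k \le p+1$. The hypothesis rearranges to $(\omega+1)(p+1) \le \chi(G)-p$, whence $k(\omega+1) \le (p+1)(\omega+1) \le \chi(G)-p \le \chi(G)-1$, i.e. $k \le \frac{\chi(G)-1}{\omega+1}$. Corollary \ref{SecondColoring} now yields an induced $Q$ with $\card{Q} = \chi(G)$ partitioned into cliques $F_1,\dots,F_k$ with $\card{F_1} = \chi(G)-(k-1)(\omega+1)$, $\card{F_i} = \omega+1$ for $i \ge 2$, and each $F_i$ containing at least $\card{F_i}-\omega$ low vertices universal in $Q$.

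\emph{Structure of $Q$.} Let $U$ collect all these universal low vertices and $W \DefinedAs V(Q)-U$. Being universal in $Q$, $U$ is a clique joined to $W$, so $Q = \join{K_{\card{U}}}{G[W]}$; counting, $\card{U} \ge \sum_i(\card{F_i}-\omega) = \chi(G)-k\omega \ge p+k \ge 3$, and $W$ is covered by $k$ cliques each of size $\le \omega$. Each $u \in U$ is low and universal in $Q$ with $\card{Q} = \chi(G)$, so $N_G[u] = V(Q)$; in particular $u$ has no neighbour outside $Q$. If $W$ were a clique we would get $Q = K_{\chi(G)}$ and hence $G = K_{\chi(G)}$, contradicting $\Delta(G) \ge \chi(G)$; so $W$ is not a clique and we may fix non-adjacent $x,y \in W$. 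It remains to show $G = O_5$.

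\emph{Endgame.} Since $G$ is vertex critical, $G-u$ has a proper $(\chi(G)-1)$-colouring $\phi$ (any $u \in U$); as $N_G(u) = V(Q)-u$ has exactly $\chi(G)-1$ vertices, $u$ being uncolourable means $\phi$ restricts to a bijection on $V(Q)-u$ --- the \emph{rainbow property}, valid for every proper $(\chi(G)-1)$-colouring of every $G-u$ with $u \in U$. Apply it to $x,y \in V(Q)-u$ (distinct colours by rainbow): if the Kempe $\set{\phi(x),\phi(y)}$-chain of $G-u$ through $x$ avoided $y$, swapping it would give a proper $(\chi(G)-1)$-colouring of $G-u$ assigning the non-adjacent $x,y$ the same colour, contradicting the rainbow property; so the chain contains an $x$--$y$ path, and since $\phi$ is injective on $V(Q)-u$ its interior lies in $V(G)-V(Q)$, which is therefore nonempty. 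Feeding this into the degree budget --- every vertex has degree in $\brackets{\chi(G)-1,\,\chi(G)+k-2}$, each $w \in W$ is adjacent to all of $U$ hence has at most $k(\omega+1)-2$ neighbours off $U$, and $\card{W} \le k\omega$ --- and re-running the Kempe/recolouring argument across all $u \in U$ and all non-adjacent pairs of $W$, forces every inequality above to equality, collapsing the parameters to $\chi(G)=5$, $k=2$, $\omega=1$, $p=1$, with $W=\set{x,y}$ and $V(G)-V(Q)$ inducing a $K_4$ joined two-to-$x$ and two-to-$y$; that is, $G = O_5$.

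\emph{Main obstacle.} The reduction and the structural description of $Q$ are routine once the arithmetic is checked; the real work is the last step: converting the rainbow/Kempe rigidity together with the degree budget into the \emph{exact} identification of $O_5$, ruling out every larger relative (note $\card{W}$ can a priori be as large as $k\omega$). This finite but delicate local analysis is precisely where Theorem \ref{PartitionTheorem}, via Catlin's algorithm, pays off: it hands us $Q$ with enough universal low vertices that the endgame no longer needs the heavier bookkeeping of \cite{krs_one}.
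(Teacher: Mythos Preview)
Your reduction is clean and in fact slightly tidier than the paper's: you set $k = \Delta(G)+2-\chi(G)$ directly so that Corollary~\ref{SecondColoring} applies verbatim, whereas the paper takes $k=p+1$ and implicitly replaces $p$ by the smallest value making the hypothesis hold. The structural description of $Q$ and the counts $\card{U}\ge \chi(G)-k\omega\ge 2p+1$ are correct.

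The gap is the entire ``Endgame''. You establish the rainbow property and the existence of a Kempe path from $x$ to $y$ through $V(G)\setminus V(Q)$, but then you write that ``re-running the Kempe/recolouring argument \ldots\ forces every inequality above to equality, collapsing the parameters to $\chi(G)=5$, $k=2$, $\omega=1$, $p=1$'' and that $V(G)\setminus V(Q)$ is a $K_4$ split two-and-two between $x$ and $y$. None of this is proved; it is a statement of what you hope is true. Concretely, nothing you have written rules out, say, $\chi=7$, $p=1$, $k=2$, $\omega=2$, $\card{U}=3$, $G[W]=2K_2$: all of your displayed inequalities are satisfied, and a single Kempe chain per non-adjacent pair tells you only that $G\ne Q$, not what $G\setminus Q$ looks like or why its size is bounded. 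You also never set up an induction on $\card{G}$, so you cannot appeal to minimality of a counterexample.

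The paper's endgame is genuinely different and does the work you are missing. First it colours $G-Q$ and views the leftover colours as lists on $Q$; the rainbow property becomes the statement that non-adjacent vertices of $H=Q-T$ have \emph{disjoint} lists, and a short counting over a maximal independent set $I\subseteq H$ forces $\card{I}\le 2$, hence $H$ is a disjoint union of two cliques, $t=2p+1$, and then $p=1$ and $Q=\join{K_3}{2K_h}$. Second, for non-adjacent $x,y\in H$ it identifies $x$ and $y$ in $G[(V(G)\setminus V(Q))\cup\{x,y\}]$, extracts a $\chi$-critical subgraph containing the identified vertex, and invokes minimality of $\card{G}$ to say that subgraph is $K_\chi$ or $O_5$; a short case analysis ($\chi\ge 6$ versus $\chi=5$) then finishes. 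Your Kempe-chain observation is a shadow of the first step (disjoint lists is strictly stronger than ``connected by a Kempe path''), and you have no analogue of the second step at all.
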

	\begin{proof}
	Suppose not and choose a counterexample $G$ minimizing $\card{G}$.  Put
	$\chi \DefinedAs \chi(G)$, $\Delta \DefinedAs \Delta(G)$ and $h \DefinedAs
	\omega(\fancy{H}(G))$. Then $p \geq 1$ and $h \geq 1$ by Brooks' theorem. Hence
	$\chi \geq 5$. By assumption, we have $h \leq \frac{\chi + 1}{p+1} - 2 =
	\frac{\chi - 2p - 1}{p + 1} \leq \frac{\chi - p - 2}{p + 1}$ since $p \geq 1$. 
	Thus $p + 1 \leq \frac{\chi - 1}{h + 1}$ and we may apply Corollary
	\ref{ThirdColoring} with $k \DefinedAs p + 1$ to get an induced subgraph $Q$ of
	$G$ with $\card{Q} = \chi$ which can be partitioned into $p + 1$ cliques $F_1,
	\ldots, F_{p + 1}$ where
			\begin{enumerate}
					\item $\card{F_1} = \chi - p(h + 1)$, $\card{F_i} = h	+ 1$ for $i \geq 2$;
					\item for each $i \in \irange{p+1}$, $F_i$ contains at least $\card{F_i} -
					h$ low vertices which are all universal in $Q$.
			\end{enumerate}
Let $T$ be the low vertices in $Q$, put $H \DefinedAs Q - T$ and $t
\DefinedAs \card{T}$.  Then $Q = \join{K_t}{H}$ and $t \geq \chi - p(h + 1) +
p(h + 1) - (p + 1)h = \chi - (p + 1)h$.  

Take any $(\chi - 1)$-coloring of $G-Q$ and let $L$ be the resulting list
assignment on $Q$.  Then $\card{L(v)} = d_Q(v)$ for each $v \in T$ and
$\card{L(v)} \geq d_Q(v) - p$ for each $v \in V(H)$.  Since $t \geq \chi - (p +
1)h \geq 2p + 1 \geq p + 1$, if there are nonadjacent $x,y \in V(H)$ and $c \in
L(x) \cap L(y)$, then we may color $x$ and $y$ both with $c$ and then greedily
complete the coloring to the rest of $H$ and then to all of $Q$, a
contradiction.  Hence any nonadjacent pair in $H$ have disjoint lists.

Let $I$ be a maximal independent set in $H$. If there is an induced $P_3$
in $H$ with ends in $I$, set $o_I \DefinedAs 1$, otherwise set $o_I
\DefinedAs 0$. Since each pair of vertices in $I$ have disjoint lists, we must
have

\begin{align*}
	\chi - 1 &\geq \sum_{v \in I} \card{L(v)} \\
	&\geq \sum_{v \in I} t + d_H(v) - p \\
	&= (t-p)\card{I} + \sum_{v \in I} d_H(v) \\
	&\geq (t-p)\card{I} + \card{H} - \card{I} + o_I \\
	&= (t - (p + 1))\card{I} + \chi - t + o_I. 
\end{align*}
Hence $\card{I} \leq \frac{t-1 - o_I}{t - (p + 1)} = 1 +
\frac{p-o_I}{t-(p+1)} \leq 1 + \frac{p-o_I}{2p + 1 - (p+1)} \leq 2$ as $t \geq
2p + 1$.  Since $G$ is not $K_\chi$, we must
have $\card{I} = 2$ and thus $t = 2p + 1$ and $o_I = 0$.  Thence $H$ is the
disjoint union of two complete subgraphs.  We then have $\frac{\chi - 2p - 1}{p
+ 1} \geq h \geq \frac{\card{H}}{2} = \frac{\chi - 2p - 1}{2}$.  Hence $p =
1$, $h = \frac{\chi - 3}{2}$ and $Q = \join{K_3}{2K_h}$.

Let $x,y \in V(H)$ be nonadjacent.  Then $d_Q(x) + d_Q(y) = \chi + 1$.  Let $A$
be the subgraph of $G$ induced on $V(G - Q) \cup \set{x,y}$.  Then $d_A(x) + d_A(y) \leq 2\Delta - (\chi + 1) = \chi - 1$.  Let
$A'$ be the graph obtained by collapsing $\set{x, y}$ to a single vertex
$v_{xy}$. If $\chi(A') \leq \chi - 1$, then we have a $(\chi - 1)$-coloring of
$A$ in which $x$ and $y$ receive the same color.  This is impossible as then we could
complete the $(\chi - 1)$-coloring to all of $G$ greedily as above.  Hence
$\chi(A') = \chi$ and thus we have a vertex critical subgraph $Z$ of $A'$ with
$\chi(Z) = \chi$.  We must have $v_{xy} \in V(Z)$ and since $d_A(x) + d_A(y)
\leq \chi - 1$, $v_{xy}$ is low.  Hence, by minimality of $\card{G}$, $Z =
K_\chi$ or $Z = O_5$.

First, suppose $\chi \geq 6$.  Then $h \geq 2$ and thus we have $z \in V(H) - \set{x, y}$ nonadjacent to $x$.  
Apply the previous paragraph to both pairs $\set{x, y}$ and $\set{x, z}$.  
The case $Z = O_5$ cannot happen, for then we would have $\chi = \chi(Z) = 5$, a contradiction.  
Put $X_1 \DefinedAs N(x) \cap V(G - Q)$, $X_2 \DefinedAs N(y) \cap V(G - Q)$, $X_3 \DefinedAs N(z) \cap V(G - Q)$.  
Then $\card{X_i} = \frac{\chi - 1}{2}$ for $i \in \irange{3}$ and $X_1$ is joined to both $X_2$ and $X_3$.  
Since $\card{X_i} - h > 0$, each $X_i$ contains a low vertex $v_i$.  But then
$N(v_1) = X_1 \cup X_2 \cup \set{x}$ and we must have $X_3 = X_2$. Whence $N(v_2) = X_1 \cup X_2 \cup \set{y, z}$ giving $d(v_2) \geq \chi$, a contradiction.

Therefore $\chi = 5$, $h = 1$ and $V(H) = \set{x, y}$.  If $Z = K_5$, then $N[x] \cup N[y]$ induces an $O_5$ in $G$ and hence $G = O_5$, a contradiction.  
Thus $Z = O_5$.  But $h = 1$, so all of the neighbors of both $x$ and $y$ are
low and hence all of the neighbors of $v_{xy}$ in $Z$ are low. But $O_5$ has no such low vertex $v_{xy}$ with all low neighbors, so this is impossible.
	\end{proof}

\begin{question}
The condition on $k$ needed in Corollary \ref{SecondColoring} is weaker than that in Corollary \ref{ThirdColoring}.  What do the intermediate cases look like?  What are the extremal examples?
\end{question}

\section{Acknowledgements}
\noindent Thanks to Hal Kierstead for many helpful discussions of this material.

\bibliographystyle{amsplain}
\bibliography{GraphColoring}

\providecommand{\bysame}{\leavevmode\hbox to3em{\hrulefill}\thinspace}
\providecommand{\MR}{\relax\ifhmode\unskip\space\fi MR }
\providecommand{\MRhref}[2]{%
  \href{http://www.ams.org/mathscinet-getitem?mr=#1}{#2}
}
\providecommand{\href}[2]{#2}
\begin{thebibliography}{10}

\bibitem{bollobasManvel}
B.~Bollob{\'a}s and B.~Manvel, \emph{Optimal vertex partitions}, Bulletin of
  the London Mathematical Society \textbf{11} (1979), no.~2, 113.

\bibitem{borodin1976decomposition}
O.V. Borodin, \emph{On decomposition of graphs into degenerate subgraphs},
  Metody Diskretn. Analiz \textbf{28} (1976), 3--11 (in Russian).

\bibitem{brooks1941colouring}
R.L. Brooks, \emph{{On colouring the nodes of a network}}, Mathematical
  Proceedings of the Cambridge Philosophical Society, vol.~37, Cambridge Univ
  Press, 1941, pp.~194--197.

\bibitem{CatlinAnotherBound}
P.A. Catlin, \emph{{Another bound on the chromatic number of a graph}},
  Discrete Mathematics \textbf{24} (1978), no.~1, 1--6.

\bibitem{Diestel}
R.~Diestel, \emph{{Graph Theory}}, {Fourth} ed., Springer Verlag, 2010.

\bibitem{kierstead2009ore}
H.A. Kierstead and A.V. Kostochka, \emph{{Ore-type versions of Brooks'
  theorem}}, Journal of Combinatorial Theory, Series B \textbf{99} (2009),
  no.~2, 298--305.

\bibitem{krs_one}
A.V. Kostochka, L.~Rabern, and M.~Stiebitz, \emph{{Graphs with chromatic number
  close to maximum degree}}, Discrete Math (In Press).

\bibitem{lovasz1966decomposition}
L.~Lov\'{a}sz, \emph{On decomposition of graphs}, SIAM J Algebraic and Discrete
  Methods \textbf{3} (1966), no.~1, 237--238.

\bibitem{mozhan1983}
N.N. Mozhan, \emph{{Chromatic number of graphs with a density that does not
  exceed two-thirds of the maximal degree}}, Metody Diskretn. Analiz
  \textbf{39} (1983), 52--65 (in Russian).

\bibitem{rabern2010b}
L.~Rabern, \emph{{An improvement on Brooks' theorem}}, Manuscript.

\bibitem{rabern2010a}
\bysame, \emph{{$\Delta$-Critical graphs with small high vertex cliques}},
  Journal of Combinatorial Theory Series B \textbf{102} (2012), no.~1,
  126--130.

\end{thebibliography}
\end{document}